\newtheorem{theorem}{Theorem}[section]
\newtheorem{proposition}[theorem]{Proposition}
\newtheorem{corollary}[theorem]{Corollary}
\theoremstyle{remark}
\newtheorem{remark}[theorem]{Remark}
\theoremstyle{definition}
\newtheorem{definition}[theorem]{Definition}
\newtheorem{example}[theorem]{Example}
\newtheorem{problem}{Problem}
\def \r{\mathbb R}\def \R{\r}
\def \z{\mathbb Z}\def \Z{\z}
\DeclareMathOperator \aee{\mbox{{\rm \ae}}}
\DeclareMathOperator{\isin}{lsin}
\DeclareMathOperator{\itan}{ltan}
\DeclareMathOperator{\iarctan}{larctan}
\DeclareMathOperator{\sgn}{sgn}\DeclareMathOperator{\MOD}{mod}
\DeclareMathOperator{\il}{l\ell} \DeclareMathOperator{\is}{lS}
\DeclareMathOperator{\aff}{Aff}
\DeclareMathOperator{\GL}{GL} 
\DeclareMathOperator{\lls}{LLS}
\DeclareMathOperator{\arclls}{\mathcal{B}}
\def \({\langle}
\def \){\rangle}
\def \concat{\circ}
\def \emptyseq{\epsilon}
\title{Lattice angles of lattice polygons}
\author{James Dolan, Oleg Karpenkov}
\begin{document}

\maketitle

\begin{abstract}
This paper is dedicated to a lattice analog to the classical ``sum of interior angles of a polygon theorem''.
In 2008, the first formula expressing conditions on the geometric continued fractions for lattice angles of triangles was derived, while the cases of $n$-gons for $n > 3$ remained unresolved.
In this paper, we provide the complete solution for all integer $n$.
The main results are based on recent advances in geometry of continued fractions.
\end{abstract}

\section*{Introduction}

The {\it sum of interior angles of a polygon theorem} in Euclidean geometry states that
the sum of all interior angles of an $n$-gon equals $(n-2)\pi$; for any sequence of $n$ non-zero angles smaller than $\pi$, altogether summing to $(n-2)\pi$, there exists a convex $n$-gon with these angles.
In this paper, we study the lattice analog of this theorem.
Namely, we provide a solution to the following problem.
\begin{problem}\label{ikea-problem}
Describe all $n$-tuples of angles in lattice convex $n$-gons.
\end{problem}

\noindent(This problem is known in Folklore as IKEA problem: {\it how to fit a given collection of furniture items to an $n$-gonal room?})

\vspace{2mm}

We develop criteria for given lattice angles to be the angles of some convex lattice polygons (Theorem~\ref{IKEA-solution} and Theorem~\ref{IKEA-2}).
Without loss of generality, we restrict ourselves to the case of the integer lattice whose group of symmetries is the group of affine integer lattice preserving transformations $\aff(2,\Z)$.
This group is a semidirect product of the group of multiplication by $\GL(2,\Z)$ matrices and the group of translations on vectors with integer coordinates.
As a by-product, we obtain a new integer generalisation of the angle-side-angle rule for triangle congruence (Proposition~\ref{asca-rule}).

\vspace{2mm}

The case of triangles was answered in 2008 in~\cite{Karpenkov2008}.
The proof was given in terms of values of long continued fractions geometrically defined by integer angles
(for further details on integer geometry we refer to~\cite{Arnold2002,karpenkov2022geometry,Karpenkov2011}).
These fraction are closely related to the theory of integer trigonometric functions developed in~\cite{Karpenkov2008, Karpenkov2009a}. In this paper, we cover all of the remaining cases of $n$-gons (i.e. $n>3$), which completes the solution of Problem~\ref{ikea-problem}.

\vspace{2mm}

There are numerous publications related to various classifications of convex lattice polygons and their frequencies.
In particular, the statistics of integral convex polygons were studied in~\cite{Arnold1980} by V.~Arnold
and in~\cite{Barany1992}
by I.~B\'ar\'any and A.M.~Vershik.
In~\cite{Schicho2003,Castryck2015} (and later developed in~\cite{Harrison2022}) the authors study the notion of lattice sizes of lattice polygons. 
Further in~\cite{Henk2022} M.~Henk and S.~Kuhlmann studied lattice width of lattice-free polyhedra.
In~\cite{Morrison2021} R.~Morrison and A.K.~Tewari classified convex lattice polygons with all lattice point visible.
Gauss-Kuzmin statistics of such polygons were studied by M.~Kontsievoch~\cite{Kontsevich1999} and further related to hyperbolic geometry by the second author in~\cite{Karpenkov2007}.

\vspace{2mm}

The relation on the angles in convex polyhedra defines the global relations to cusp singularities in toric varieties~\cite{Tsuchihashi1983, Karpenkov2008} and their interplay with rigidity theory~\cite{Mohammadi2023}.
Collection of angles in the polygons impact the values of coefficients in Earhart polynomials, see e.g. in~\cite{Beck2007}.

\vspace{2mm}

\noindent
{\bf This paper is organised as follows.}
In Section~\ref{Preliminary definitions and formulation of the main statements}, we give all necessary definitions on continued fractions, sails of angles, and chord curvatures of broken lines.
We also adapt the generalised angle-side-angle rule for lattice geometry.
We formulate the main results on the relations between the angles of lattice $n$-gons in Section~\ref{Classification of the angles for convex lattice polygons}.
Further in Section~\ref{Further tools needed for the proof},
we introduce additional notions and definitions that are required for the proofs and prove some auxiliary statements.
Finally in Section~\ref{Proofs of the main results}, we complete the proofs of main results. 
We conclude this paper in Section~\ref{A few words on the multidimensional case} with a short discussion on the IKEA problem in the multidimensional case, which is still open.

\section{Preliminary definitions and formulation of the main sta\-tements}
\label{Preliminary definitions and formulation of the main statements}

In this section we give basic definitions of lattice geometry, continuants and continued fractions. 
We discuss locally convex broken lines and introduce the angle-curvatures sequences classifying them. Finally we present a new integer angle-side-curvature-angle rule.  

\subsection{The monoid of finite integer sequences under concatenation}

\vspace{2mm}

Denote the set of all integer sequences by
$$
\Z ^*
=\bigcup\limits_{n=0}^{\infty}
\Z^n,
$$
where $\Z^0=\{\emptyseq\}$ represents the empty sequence 
$\emptyseq=()$.
There is the following natural operation of concatenation 
acting on $\Z^*$.
Consider 
$$
X=(x_1,\dots,x_n)\in \Z^* \quad \hbox{and} 
\quad  Y=(y_1,\dots,y_m) \in \Z^*.
$$
Then the {\it concatenation} $X\concat Y$ is the sequence
$$
X\concat Y=(x_1,\dots,x_n,y_1,\dots,y_m).
$$

We have that $(\Z^*,\circ)$ is a monoid.

\subsection{Continuant and continued fractions}

Let us recall the classic notion of continued fractions.

\begin{definition}
Let $\alpha\in\R\cup\{\infty\}$ and $a_0,\ldots,a_n$ be real numbers satisfying the following equation
$$
\alpha=a_0+\cfrac{1}{ a_1+\cfrac{1}{ a_2+\cfrac{1}{\ddots+\cfrac{1}{a_n}}}}
$$
Then we say that the expression in the right-hand side of the equality is a {\it continued fraction expansion} of $\alpha$; we denoted this by $[a_0;a_1:\cdots :a_n]$.
We say that $n+1$ is the length of the continued fraction and that $a_0, \ldots, a_n$ are the {\it elements} of this continued fraction.
\end{definition}

\begin{remark}
While evaluating these expressions on the right hand side,
we might get $0$ intermediate values. 
Here are the following simple three rules to deal with them
$$
\frac10=\infty, \qquad
a+\infty=\infty, \qquad 
\frac{1}{\infty}=0.
$$
\end{remark}
The following function is closely related to the notion of the continued fraction.

\begin{definition}
The \textit{continuant function} $K:\Z^*\to\Z$ is defined iteratively as follows:
$$
\begin{array}{l}
K(\emptyseq)=1,\qquad
K(x_1) = x_1,\\
K(x_1,\dots,x_n)=x_nK(x_1,\dots,x_{n-1})+K(x_1,\dots,x_{n-2}),
\quad \hbox{for $n\ge 2$}.
\end{array}
$$
\end{definition}

\begin{remark}
Continuants and continued fractions are linked by the following formula.
$$
[a_0;a_1:\cdots:a_n]=
\frac{K(a_0,a_1,\ldots,a_n)}{K(a_1,\ldots,a_n)}.
$$
For more information on theory of continued fractions see~\cite{karpenkov2022geometry,Khinchin1961}.
\end{remark}

\subsection{Definitions of integer geometry}

We say that a point/vector in $\r^2$ is {\it integer} if its coordinates are integer. We say that an angle is {\it integer} if its vertex is integer. Further, an integer  angle is {\it rational} if both of its edges contain integer points distinct to the vertex. 
Finally a line is {\it integer} if it contains an least two distinct integer points.

\vspace{2mm}

Let us collect the following invariants of lattice geometry. 
\begin{samepage}
    
\begin{definition}\quad
\begin{itemize}
\item The {\it integer length} of an integer segment $AB$ is the number of connected components in $AB\setminus \Z^2$. Denote it by $\il (AB)$.

\vspace{2mm}

\item Let $AB$ and $AC$ be two integer segments,
then the value $\det(AB,AC)$ is invariant under 
the action of $\aff (2,\z)$.
It is called {\it oriented integer area} of the triangle $\triangle BAC$. 
Here the absolute value of  $\det(AB,AC)$ is called the {\it integer area} of $\triangle BAC$, it is denoted by $\is\triangle BAC$.

\item Let $AB$ and $AC$ be two integer segments,
then the value 
$$
\isin \angle BAC =\frac{\is \triangle ABC}{\il (AB)\il(AC)}
$$
is the {\it integer sine} of the angle $\angle BAC$.

\item Let $\ell$ be an integer line and $A$ be some integer point outside $\ell$. The {\it integer distance} between
$A$ and $\ell$ is the index of a sublattice generated all integer vectors starting at $A$ and ending at integer points of $\ell$ in the lattice $\Z^2$. 

\item Let $\ell_1$ and $\ell_2$ be two distinct parallel  integer lines. The {\it integer distance} between then
is the integer distance between any integer point of $\ell_1$ and the line $\ell_2$.

\end{itemize}

\end{definition}
\end{samepage}

\vspace{2mm} 

Recall that two sets $U$ and $V$ are integer congruent if there is an integer affine transformation (i.e., an affine integer lattice preserving transformation) that is a bijection between $U$ and $V$. We write $U\cong V$.

\subsection{Sails for angles and their LLS sequences, integer arctangents}

In this subsection we discuss the classical geometry of continued fractions. We start with the following definition.

\begin{definition}
Consider the convex hull of all integer points inside a rational angle $\alpha$ except for its vertex. Then the boundary of this convex hull is a broken line. The {\it sail} of $\alpha$ is the union of all bounded edges of the broken line.
\end{definition}
\begin{figure}
$$
\includegraphics[height=7cm]{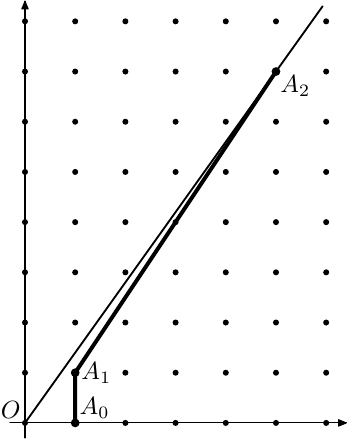}
$$
\caption{The sail for an angle.}
\label{sail-1}
\end{figure}

\begin{definition}
For a rational integer angle $\alpha$ with sail $A_0,\dots,A_{n+1}$, we define the \textit{LLS sequence} of $\alpha$ as the sequence $(a_0,\dots,a_{2n})$ where
$$
\begin{array}{rcl}
a_{2k}&=&\il(A_kA_{k+1});\\
a_{2k-1}&=&\isin(\angle A_{k-1}A_kA_{k+1}).
\end{array}
$$
We also write $\lls(\alpha)=(a_0,\dots,a_{2n})$. We then define the \textit{integer arctangent} of $\alpha$ as the continued fraction expansion $\itan(\alpha)=[a_0;a_1,\dots,a_{2n}]$.
\end{definition}

We show an example of the sail for the angle with vertices $(1,0)$, $(0,0)$, and $(5,7)$ on Figure~\ref{sail-1}.

\begin{remark}
Our approach is alternative to Hirzebruch-Jung continued fractions developed in~\cite{Hirzebruch1953} and~\cite{Jung1908}.
\end{remark}

\begin{remark}
Here LLS is an abbreviation of "lattice length sine". This comes from the fact that the LLS sequence of a sail is precisely the alternating sequence of integer lengths of its line segments and integer sines of the angles between them.
\end{remark}

Note that $\itan$ takes all rational values greater than or equal to $1$. 
In fact, integer tangent is a complete invariant of integer angles (\cite[Theorem~4.11]{karpenkov2022geometry}). There is a natural way to define the integer arctangent,
it is very similar to the Euclidean case.

\begin{definition}
Let $m\ge n$ be two  positive integers that are relatively prime.
The {\it integer arctangent} of $m/n$ is the following integer angle
$$
\iarctan\frac{m}{n}= \angle AOB_{m/n},
$$
where $A=(1,0)$, $O=(0,0)$, and $B_{m/n}=(n,m)$.
(See~\cite{karpenkov2022geometry} for further details.)
\end{definition}

\subsection{Locally convex broken lines and their chord curvature}

A {\it convex polygon} $P$ is the boundary of the convex hull of some finite set $S$ that spans the whole $\r^2$. A point of $P$ is a {\it vertex} if it does not belong to some open interval contained in the convex hull of $P$.
By this definition all the angles of convex polygons are smaller than $\pi$.

In this paper, we work with convex polygons that are the boundaries of the convex hull of finite sets of points not contained in one line. A convex $n$-gon $P$ is therefore given  by its vertices $A_1\dots A_n$ ordered anticlockwise from some arbitrary point $A_1$ and indexed via residues of $n$. In particular, we write $A_0=A_n$ and $A_{n+1}=A_1$.


\vspace{2mm}

We say that an angle $\angle ABC$ is {\it positively/negatively oriented} if the vectors $BA$ and $BC$ define a positive/negative basis in $\r^2$.
In case if $A$, $B$, and $C$ are in a line we say that the angle {\it does not have a natural orientation}.

\begin{definition}
    We say that an integer broken line is \textit{locally convex} if all its angles are simultaneously either positively oriented or negatively oriented.
\end{definition}

\begin{remark}
All the convex polygons are locally convex.
\end{remark}


\begin{definition}\label{chord-curvature-ABCD}
Consider a locally convex broken line $ABCD$ with no three consecutive points being in a line.
We say that the  \textit{chord curvature} of $ABCD$ is the following quantity
$$
\aee(ABCD)=
\il(BC) -
\sgn\langle BC, B'C'\rangle \cdot \il(B'C')  - 2,
$$
where 
\begin{itemize}
\item the point $B'$ is the integer point of the angle $ABC$ at the unit integer distance to the segment $BC$ which is the closest to the line $AB$;

\item the point $C'$ is the integer point of the angle $BCD$ at the unit integer distance to the segment $BC$ which is the closest to the line $CD$;

\item the value $\sgn\langle BC, B'C'\rangle$ is defined as follows:
$$
\sgn\langle BC, B'C'\rangle=
\left\{
\begin{array}{rl}
1, & \hbox{if $BC$ and $B'C'$ has the same direction;}\\
0, &  \hbox{if $B'=C'$;}\\
-1, & \hbox{if   $B'C'$ has the opposite direction to $BC$.}
\end{array}
\right.
$$
\end{itemize}
\end{definition}

\begin{example}
Let us consider the broken line $ABCD$ with points 
$$
A=(0,2), \qquad 
B=(4,0), \qquad 
C=(0,0), \quad \hbox{and} \quad 
D=(2,3) 
$$
(see Figure~\ref{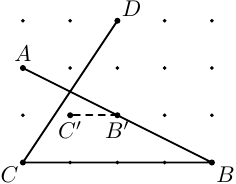}).
Then $B'=(1,1)$ and $C'=(2,1)$. 
Thus $\il (BC)=4$ and $\il (B'C')=1$
The vector $B'C'$ has the same direction with the vector $BC$,
and hence $\sgn\langle BC, B'C'\rangle=1$.
We have
$$
\aee(ABCD)=
4-1 \cdot 1 -2=1.
$$
\begin{figure}
$$
\includegraphics[width=5cm]{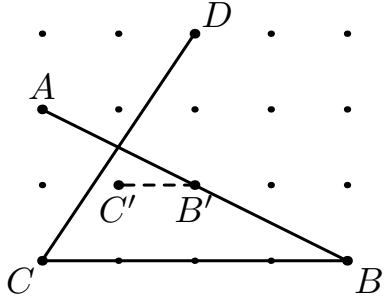}
$$
\caption{The broken line $ABCD$ and its points $B'$ and $C'$.}
\label{figure-1.pdf}
\end{figure}
\end{example}

Let us finally give the definition of chord curvature for broken lines on $n$ vertices.
\begin{definition}\label{chord-curvature}
Let $A_1\ldots A_n$ be a locally convex broken line. We say that the {\it chord curvature} at an edge $A_iA_{i+1}$ (where $2\le i \le n-2$) is the chord curvature of $A_{i-1}A_iA_{i+1}A_{i+2}$.  
\end{definition}

\subsection{Angle-curvature sequences for locally convex broken lines and convex polygons}

As we have already noticed above the chord curvatures are natural lattice invariants to describe locally convex broken lines and polygons.

\vspace{2mm}

Let us now define a lattice invariant that will playing the leading role in our main statements.
\begin{definition}
Let $A_0A_1\ldots A_nA_{n+1}$ be a locally convex broken line.
Set
$$
\begin{array}{l}
\alpha_i=\iarctan(\angle A_{i-1}A_iA_{i+1}), \quad \hbox{for $i=1,\ldots, n$};\\
\aee_i=\aee(A_{i-1}A_iA_{i+1}A_{i+2}), \quad \hbox{for $i=1,\ldots, n-1$}.\\
\end{array}
$$
We say that the sequence 
$$
\mathcal{S}=(\alpha_1,\aee_1,\ldots, \alpha_{n-1},\aee_{n-1}, \alpha_n)
$$ 
is the {\it angle-curvature sequence} for the broken line. 
\end{definition}

\begin{definition}
Let $A_1\ldots A_n$ be a locally convex polygon.
Set
$$
\begin{array}{l}
\alpha_i=\iarctan(\angle A_{i-1}A_iA_{i+1}), \quad \hbox{for $i=1,\ldots, n$};\\
\aee_i=\aee(A_{i-1}A_iA_{i+1}A_{i+2}), \quad \hbox{for $i=1,\ldots, n$}.\\
\end{array}
$$
We say that the sequence 
$$
\mathcal{S}=(\alpha_1,\aee_1,\ldots, \alpha_{n-1},\aee_{n-1}, \alpha_n, \aee_{n})
$$ 
is the {\it angle-curvature sequence} for the polygon. 
\end{definition}

\begin{remark}
In case if we do not fix a starting point of the polygon, the corresponding angle-curvature sequence is considered to by cyclic.
\end{remark}

Similar to triangles we give the following criterion of integer congruence of integer broken lines.

\begin{proposition}
Two broken locally convex broken lines $($or polygons$)$ are integer congruent if and only if their angle-curvature sequences coincide and if their sequences of integer lengths coincide.
\qed
\end{proposition}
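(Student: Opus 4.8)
The plan is to prove both implications separately: the forward direction by invariance, and the converse by reconstructing one broken line from the other vertex by vertex. First, the easy direction. Every quantity occurring in the statement is built from integer lengths, integer sines, and signs of relative directions, and each of these is preserved by $\aff(2,\Z)$ (the oriented area changes sign under reflections, but $\is$, $\isin$, $\il$, $\iarctan$ and the sign $\sgn\langle BC,B'C'\rangle$ are genuine $\aff(2,\Z)$-invariants). Hence if two locally convex broken lines (or polygons) are integer congruent, their angle-curvature sequences and their integer-length sequences coincide, which settles the ``only if'' part. After replacing one of the objects by a reflected copy if necessary, I may assume throughout that both carry the same orientation.

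For the converse I would reconstruct the second broken line $B_0B_1\ldots B_{n+1}$ from the first $A_0A_1\ldots A_{n+1}$. First apply a translation sending $A_0$ to $B_0$, followed by a $\GL(2,\Z)$ map carrying the primitive direction of $A_0A_1$ onto that of $B_0B_1$; since $\il(A_0A_1)=\il(B_0B_1)$ this forces $A_1=B_1$. The only remaining freedom is the stabiliser of the directed segment $B_0B_1$, namely the group of integer shears fixing the line $B_0B_1$. I would absorb this residual freedom using the first angle: with the ray $A_1A_0$ fixed, the value $\alpha_1=\iarctan(\angle A_0A_1A_2)$ determines the ray $A_1A_2$ only up to a shear fixing that line (because $\itan$ is a complete invariant of integer angles and the orientation-preserving stabiliser of a ray in $\GL(2,\Z)$ is exactly the unipotent shear subgroup). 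This shear ambiguity \emph{is} the residual frame freedom, so I can fix the frame so that $A_1A_2$ maps onto $B_1B_2$, and then $\il(A_1A_2)=\il(B_1B_2)$ gives $A_2=B_2$. At this point three affinely independent points are matched, so no frame freedom remains.

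It remains to show, for $i\ge 2$, that once $A_{i-2},A_{i-1},A_i$ are matched to $B_{i-2},B_{i-1},B_i$ with no freedom left, the data forces $A_{i+1}=B_{i+1}$; this is the crux. The mechanism is the chord curvature $\aee_{i-1}=\aee(A_{i-2}A_{i-1}A_iA_{i+1})$. The auxiliary point $B'$ (coming from the angle $\angle A_{i-2}A_{i-1}A_i$) is already determined by the fixed triple $A_{i-2},A_{i-1},A_i$. Since local convexity places both $B'$ and $C'$ on the integer line parallel to $A_{i-1}A_i$ at unit integer distance on the convex side, the identity $\aee_{i-1}=\il(A_{i-1}A_i)-\sgn\langle A_{i-1}A_i,B'C'\rangle\,\il(B'C')-2$ pins down the signed integer displacement of $C'$ from $B'$ along $A_{i-1}A_i$, hence the point $C'$ itself. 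Finally, with the vertex $A_i$ and the ray $A_iA_{i-1}$ fixed, the point $C'$ removes the shear ambiguity in the reconstruction of the ray $A_iA_{i+1}$ from the angle $\alpha_i$: a shear fixing the line $A_{i-1}A_i$ translates $C'$ bijectively along the unit-distance line, so prescribing $C'$ leaves exactly one admissible ray $A_iA_{i+1}$. The integer length $\il(A_iA_{i+1})$ then fixes $A_{i+1}=B_{i+1}$, completing the induction for broken lines. The polygon case is identical, reconstructing $A_1,\dots,A_n$ from the cyclic data, with the closing edge $A_nA_1$ matching $B_nB_1$ automatically since both polygons carry the same cyclic angle-curvature and length sequences.

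The \textbf{main obstacle} is the rigidity lemma invoked twice above: that fixing a vertex, one incident ray and the integer angle determines the opposite ray up to a one-parameter shear, and that this shear is detected faithfully by $C'$. I would prove it by moving the angle into the standard position $\iarctan(m/n)=\angle AOB_{m/n}$, identifying the orientation-preserving stabiliser of the ray $OA$ in $\GL(2,\Z)$ with the unipotent shears $\begin{pmatrix}1&k\\0&1\end{pmatrix}$, and checking that this subgroup acts freely and transitively both on the admissible second rays realising a given $\itan$ with fixed orientation (these are exactly the rays $(n+km,m)$, $k\in\Z$, the second coordinate being forced to equal $\isin=m$) and on the positions of $C'$ along the unit-distance line. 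The only points demanding genuine care are the sign bookkeeping in $\sgn\langle BC,B'C'\rangle$, including the degenerate case $B'=C'$, and the verification that local convexity indeed places $B'$ and $C'$ on the same side of $A_{i-1}A_i$, so that their signed displacement is well defined.
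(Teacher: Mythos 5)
Your argument is correct, but the comparison here is necessarily lopsided: the paper gives no proof of this proposition at all — it is stated with the proof omitted as routine. The nearest in-paper analogue is the proof of the ASCA rule (Proposition~\ref{asca-rule}), which is essentially your argument restricted to triangles: match one angle by a congruence, use the chord curvature to force the first edges of the two sails to point the same way, then intersect lines to pin down the last vertex. What you do differently is (a) organise the reconstruction as an induction along the broken line, matching one vertex at a time, and (b) isolate the rigidity lemma that the paper leaves implicit: the orientation-preserving stabiliser of a ray in $\GL(2,\Z)$ is the unipotent shear group, it acts freely and transitively both on the rays realising a given ordered angle class (your orbit $(n+km,m)$ in standard position) and on the possible positions of the auxiliary point $C'$ of Definition~\ref{chord-curvature-ABCD}, and the chord curvature together with $\il(A_{i-1}A_i)$ and the already-determined point $B'$ recovers $C'$ exactly. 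This buys a complete written proof for all broken lines and polygons, where the paper only has the triangle case on record. Two caveats you should make explicit in a final write-up: first, the completeness of $\itan$ must be invoked for \emph{ordered} angles (first edge to first edge, orientation preserved) — as point sets an angle and its transpose coincide but their tangents generally differ — and your shear-orbit computation is precisely what licenses this usage; second, the reduction to a common orientation by reflecting one of the broken lines is legitimate exactly because every quantity in the statement ($\il$, $\isin$, $\aee$, and hence the $\iarctan$ entries) is unsigned, a point your forward direction already records.
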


\subsection{Discussion on integer angle-side-curvature-angle rule}

From Euclidean geometry we know the classical {\it ASA $($angle-side-angle$)$ rule}: If two pairs of angles of two triangles are equal, and the included sides are equal, then these triangles are integer congruent.

\vspace{2mm}

It turns out that this rule is not sufficient for lattice geometry. Let us illustrate this with the following example

\begin{example}
Let $A=(0,0)$, $B=(2,0)$, $C=(1,1)$, $C'=(0,2)$.
Consider triangles $\triangle ABC$ and $\triangle ABC'$.
Then
$$
\angle BAC\cong \angle BAC'\cong \angle ABC\cong \angle ABC' \cong\iarctan 1
$$
and both triangles share the same segment $ABC$.
However their lattice areas are 
$$
\is(ABC)=2 \quad \hbox{and} \is (ABC')=4,
$$
and hence these triangles are not integer congruent to each other.
\end{example}

The notion of chord curvature providing us a new refined rule.

\begin{proposition}\label{asca-rule}
Two triangles $\triangle ABC$ and $\triangle A'B'C'$
are integer congruent $($preserving the order of points$)$
if and only if the following 
{\it integer angle-side-curvature-angle rule} $($or {\it ASCA rule} for short$)$ holds:
$$
\begin{array}{ll}
\angle ABC\cong\angle A'B'C', \qquad &  \angle BAC\cong\angle B'A'C',\\
\il (AB)=\il(A'B'), \qquad &
\aee (CABC)= \aee (C'A'B'C').\\
\end{array}
$$
\end{proposition}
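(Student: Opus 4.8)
The forward implication is immediate: if $g\in\aff(2,\Z)$ realises the congruence with $g(A)=A'$, $g(B)=B'$, $g(C)=C'$, then each of the four quantities in the statement is an $\aff(2,\Z)$-invariant (integer angles via $\iarctan$, the integer length $\il$, and the chord curvature $\aee$, which is assembled from $\il$ and the sign pairing), so all four equalities hold. The content is the converse, and the plan is to transport both triangles to a common normal form and then show that the remaining apex is forced.

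First I would put each triangle into canonical coordinates. Using $\il(AB)=\il(A'B')=:L$, a translation together with a $\GL(2,\Z)$ map sends $A\mapsto(0,0)$ and $B\mapsto(L,0)$; composing with the reflection $(x,y)\mapsto(x,-y)$ if necessary, we may assume the apex lies in the upper half-plane. Since $\itan\angle BAC=\itan\angle B'A'C'=:p/q$ and integer tangent is a complete invariant of integer angles, a shear $(x,y)\mapsto(x+by,y)$ (which fixes $A$, $B$ and the ray $AB$) carries the ray $AC$ onto the ray through $(q,p)$; applying the analogous normalisation to $\triangle A'B'C'$ brings \emph{both} apices onto the same ray $R$ through the origin with primitive direction $(q,p)$. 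Thus after these order-preserving integer transformations we have $A=A'=(0,0)$, $B=B'=(L,0)$, and $C=m\,(q,p)$, $C'=m'\,(q,p)$ with $m=\il(AC)$, $m'=\il(A'C')$. It now suffices to prove $m=m'$.

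Next I would read off the two remaining invariants in these coordinates. Write $P$ and $Q$ for the two auxiliary points of Definition~\ref{chord-curvature-ABCD} applied to the broken line $CABC$: both lie on the line $y=1$, with $P$ the point of $\angle CAB$ closest to the line $CA$ and $Q$ the point of $\angle ABC$ closest to the line $CB$. Since $AB$ points in the positive $x$-direction and both points sit on $y=1$, the signed pairing collapses to the signed horizontal displacement, so
$$
\aee(CABC)=L-(x_Q-x_P)-2 .
$$
The point $P$ depends only on the \emph{line} $CA$, hence only on $\angle BAC$, and is therefore the same for both triangles; as $L$ and $\aee(CABC)=\aee(C'A'B'C')$ agree as well, the value $x_Q$ is the same for both triangles. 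Finally, the angle at $B$ pins down the direction of $BC$: because $\angle ABC$ shares the edge $BA$ (the negative $x$-axis) with its congruent copy, its admissible directions form the shear orbit $(u_0+kp_B,\,p_B)$, $k\in\Z$, where $p_B=\isin\angle ABC$. The line through $B=(L,0)$ in direction $(u_0+kp_B,p_B)$ meets $y=1$ at abscissa $L+u_0/p_B+k$, an arithmetic progression of step one; consequently $x_Q$, being the closest lattice abscissa inside the angle to this crossing, is a strictly increasing (step-one) function of $k$. Hence the common value of $x_Q$ selects a single direction of $BC$, whose intersection with the fixed ray $R$ is a single point. Therefore $C=C'$, i.e.\ $m=m'$, and the normalising transformations exhibit the required order-preserving congruence.

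The main obstacle is precisely this last step: showing that the chord curvature is a \emph{faithful} selector, i.e.\ that among all apices on $R$ producing the prescribed angle at $B$ the point $Q$ (equivalently the value $x_Q$) is injective. The clean mechanism is the observation that the crossing abscissa $L+u_0/p_B+k$ advances by exactly one as the shear index $k$ advances by one, so that the integer staircase $k\mapsto x_Q$ has unit steps and no plateaus; I would need to treat with care the boundary case $p_B=1$ (and, more generally, the case where the line $CB$ passes through a lattice point of $y=1$), in which $Q$ lies on $CB$ itself, to confirm that the step-one behaviour persists there as well.
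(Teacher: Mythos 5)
Your proof is correct, and its outer skeleton coincides with the paper's: the forward direction is pure invariance, and for the converse both arguments use the congruence of the angles at $A$ together with $\il(AB)=\il(A'B')$ to put the two triangles into a common frame, and then show that the chord curvature and the angle at $B$ jointly pin down the line $BC$, so that $C$ is recovered as the intersection of that line with the fixed ray $AC$. The genuine difference lies in the mechanism of that key step. The paper argues through sail theory: equality of chord curvatures forces the first edges of the sails of $\angle ABC''$ and $\angle ABC$ to point in the same direction, and two congruent angles sharing the edge $BA$ whose sails start in the same direction have coinciding sails, hence the same second edge line. You instead work entirely in explicit coordinates: the identity $\aee(CABC)=L-(x_Q-x_P)-2$ on the line $y=1$, the observation that $x_P$ is fixed by the angle at $A$ alone, and the staircase argument that the directions of $BC$ compatible with the angle at $B$ form a single shear orbit $(u_0+kp_B,\,p_B)$ whose crossing abscissa with $y=1$ advances by exactly one per unit of $k$, making $k\mapsto x_Q$ injective. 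Your route is more elementary and self-contained --- it never invokes sails, and it makes fully explicit the step that the paper's proof treats most tersely (``the sails \ldots coincide'') --- while the paper's route reuses the sail machinery that it needs anyway for Proposition~\ref{properties-aeeb} and the main theorems. Two details you rely on do hold: angle congruence here means congruence of angles with ordered edges (the paper's convention, under which $\itan$ is a complete invariant), which is what forces a congruence of $\angle ABC$ with $\angle ABC'$ fixing the common edge $BA$ to be a shear and hence yields a single orbit rather than two interleaved ones; and your flagged boundary case, where $BC$ passes through a lattice point of $y=1$, is harmless, since then $Q$ lies on $BC$ itself and $x_Q$ is still the floor of the crossing abscissa, so the unit-step behaviour persists.
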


\begin{proof}
Let all the ASCA rule condition hold.
Consider the transformation sending the angle $\angle C'A'B'$ to the angle $\angle CAB$. Here the images of $B'$ and $C'$ are some points $B''$ and $C''$
Since $\il (AB)=\il(A'B')=\il (AB'')$, we have $B''=B$.

\vspace{2mm}

Since $\aee (C''AB''C'')=\aee (C'A'B'C')=\aee (CABC)$ the first edges of the sails of  $\angle ABC''$ and $\angle ABC$ have the same direction.
Since  $\angle BAC''\cong\angle B'A'C'\cong\angle BAC$, the segment $AB$ is a common edge for the triangles $\triangle ABC''$
and $\triangle ABC''$, and the sails for $\angle ABC''$ and $\angle ABC$ have the same direction, we get that the sails for
$\angle ABC''$ and $\angle ABC$ coincide. This implies that the line $BC$ coincides with the line $BC''$.

\vspace{2mm}

The lines $AC$ and $AC''$ coincide by the construction 
and the lines $BC$ coincides with the line $BC''$ from the above. 
Hence $C=C''$. Therefore, the triangles  $\triangle ABC$ and $\triangle A'B'C'$ are integer congruent.

\vspace{2mm}

Conversely, if the triangles  $\triangle ABC$ and $\triangle A'B'C'$ are integer congruent, then all the listed identities in the ASCA rule hold, since the corresponding quantities are lattice invariants.
\end{proof}

\section{Classification of the angles for convex lattice polygons}
\label{Classification of the angles for convex lattice polygons}
In this subsection, we formulate the main results of this paper.
The proofs are provided later in Section~\ref{Proofs of the main results}.

\subsection{Solution to IKEA problem}

\vspace{2mm}

We start with the following notation.

\begin{definition}\label{mathcal-S-n}
For an angle-curvature sequence 
$$
\mathcal{S}=(\alpha_1,\aee_1,\ldots, \alpha_{n-1},\aee_{n-1}, \alpha_n, \aee_n)
$$ 
we set $\mathcal S_j^k$ to be the subsequence $(\alpha_j,\aee_j,\dots,\alpha_{k-1}, \aee_{k-1},\alpha_k)$. 
We then write 
$$
\lls(\mathcal S_j^k)=\lls(\alpha_j)\concat (\aee_j)\concat \cdots \concat \lls(\alpha_{k-1})\concat (\aee_{k-1})\concat \lls(\alpha_{k}).
$$
\end{definition}

\begin{definition}
    We say an integer sequence {\it changes sign $n$ times} if after removing all zero elements it contains exactly $n$ pairs of consequent elements with opposite signs.
\end{definition}

We are ready to formulate the first main result of the paper.

\begin{theorem}\label{IKEA-solution}
Consider the set of integer angles $\alpha_1, \ldots, \alpha_n$
and the integers $\aee_1,\ldots, \aee_{n}$.
Then there exists a convex $n$-gon 
with the prescribed angle-curvature sequence,
$$
\mathcal{S}=(\alpha_1,\aee_1,\ldots, \alpha_{n-1}\aee_{n-1}, \alpha_n, \aee_n),
$$ 
if and only if the following conditions hold:
\begin{itemize}
\item 
$K(\lls(\mathcal S_1^n))=0$;
\item $
\displaystyle \aee_{n} = -\left\lfloor \frac{K(\lls(\mathcal S_2^n)\concat (1))}{K(\lls(\mathcal S_2^n))} \right\rfloor$;
\item 
The sequence
$(K(\lls(\mathcal S_1^j)))_{j=1}^{n}$
changes the sign 
$n-3$ times.
\end{itemize}
\end{theorem}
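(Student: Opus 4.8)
The plan is to read the entire boundary as a single LLS sequence $\lls(\mathcal S_1^n)$ and to exploit the correspondence between continuants and products of the matrices $M(b)=\left(\begin{smallmatrix} b & 1\\ 1 & 0\end{smallmatrix}\right)$: the product $M(b_0)\cdots M(b_N)$ has the four continuants of $(b_0,\dots,b_N)$ as its entries, with $K(b_0,\dots,b_N)$ in the top-left corner. Reading the sails of the angles $\alpha_i$ and the connecting curvatures $\aee_i$ as consecutive blocks of this product, I expect the partial products to recover the primitive direction vectors of the successive edges of the developed boundary; concretely, the first coordinate of the direction of the edge $A_jA_{j+1}$ should be, up to sign, $K(\lls(\mathcal S_1^j))$ in the frame normalised so that the first edge points along $e_1$. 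I would then prove the theorem by showing separately that the two algebraic identities encode closure of the boundary and that the sign-change count encodes its convexity.

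For necessity I start from a convex $n$-gon, build $\lls(\mathcal S_1^n)$ from its sails and curvatures, and track the developed edge directions through the matrix product above. Closure $A_{n+1}=A_1$ means the accumulated transformation sends the first edge to the position fixed by the excluded final curvature; since each $M(b)$ has determinant $-1$, matching first coordinates forces the top-left continuant $K(\lls(\mathcal S_1^n))$ to vanish, which is the first condition. The last curvature $\aee_n$ is not recorded inside $\mathcal S_1^n$: it is the unique integer that glues the final sail back onto the initial one, and solving this gluing equation yields the floor expression of the second condition, using that appending a $1$ to a continuant adds the previous continuant, so that $K(\lls(\mathcal S_2^n)\concat(1))/K(\lls(\mathcal S_2^n))$ is precisely the quantity whose integer part fixes $\aee_n$. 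Finally, convexity means the developed edges turn monotonically and wind around exactly once, and reading this off the direction vectors forces the scalars $K(\lls(\mathcal S_1^j))$ into the near-alternating sign pattern counted by the third condition (for $n-1$ nonzero values, $n-3$ sign changes leaves exactly one adjacent same-sign pair).

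For sufficiency I run the construction backwards. Given the data I assemble the blocks into a locally convex broken line, using the first condition to guarantee that the developed directions close up consistently and the second to pin down $\aee_n$ so that the terminal vertex coincides with $A_1$. The resulting closed broken line is determined up to integer congruence by the angle-side-curvature-angle rigidity of Proposition~\ref{asca-rule} applied corner by corner, so it only remains to certify that it bounds a genuine convex polygon rather than a self-overlapping or non-convex closed chain; this is exactly what the third condition is designed to provide.

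The main obstacle is precisely this last equivalence: showing that the developed boundary is embedded and strictly convex if and only if the sequence $(K(\lls(\mathcal S_1^j)))_{j=1}^n$ changes sign $n-3$ times. I expect to handle it by induction on $n$, with the base case $n=3$ (zero sign changes) supplied by~\cite{Karpenkov2008}, and an inductive step showing that splitting one edge of a convex chain into two by inserting a vertex increases the sign-change count by exactly one. The delicate points are the bookkeeping forced by the zero-removal convention in the definition of sign changes, caused by intermediate continuants that happen to vanish, and ruling out closed chains that satisfy the two identities yet wind around more than once or self-intersect; controlling the winding number in terms of the sign pattern is where the real work lies.
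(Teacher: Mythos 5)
Your necessity half is, modulo a transposed coordinate convention, the paper's own argument in disguise: the ``developed'' primitive edge directions you extract from partial matrix products are precisely the edge vertices of the paper's sail diagram, the identification of $K(\lls(\mathcal S_1^j))$ with a coordinate of the $j$-th such vector is Theorem~\ref{Geometry-of-broken-lines} plus Corollary~\ref{LLS-LLS-corollary}, and your ``gluing equation'' for $\aee_n$ is Corollary~\ref{aee-formula-corollary} (whose proof requires the geometric facts of Proposition~\ref{properties-aeeb}, not merely the recursion $K(U\concat(1))=K(U)+K(U')$). The genuine gap is in your sufficiency half. All three conditions constrain only the edge \emph{directions}: $K(\lls(\mathcal S_1^n))=0$ says the concatenated sails return to the axis, i.e.\ the sail diagram closes up, and it says nothing about any polygon closing up. To obtain a polygon you must still choose positive edge lengths $\lambda_1,\dots,\lambda_n$ with $\sum_i\lambda_i v_i=0$, where the $v_i$ are the primitive direction vectors, and the angle-curvature data impose no constraint on these lengths. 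Your claim that the first two conditions force ``the terminal vertex to coincide with $A_1$'' conflates these two closures, and your appeal to Proposition~\ref{asca-rule} to determine the closed chain ``up to integer congruence'' fails for the same reason: the angle-curvature sequence is not a complete invariant (the paper's remark after its proof notes that infinitely many pairwise non-congruent $n$-gons share one angle-curvature sequence; congruence also needs the edge lengths). What the paper supplies here, and what is missing from your proposal, is a two-step construction: first the classical Euclidean fact that $n$ directions turning monotonically with angle sum $(n-2)\pi$ admit positive lengths closing a convex polygon, and then a rational perturbation followed by an integer rescaling to land the vertices on the lattice.

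A second gap is the equivalence between convexity and the count of $n-3$ sign changes, which you yourself flag as ``where the real work lies'' but leave as an unexecuted induction. The inductive step is doubtful in the sufficiency direction: you would need to merge two consecutive angles of data satisfying the conditions for $n$ into data satisfying them for $n-1$, which alters the angles themselves (it is not the inverse of inserting a vertex on an edge) and interacts delicately with the zero-removal convention. The paper's mechanism is direct and you should adopt it: the number of sign changes of $(K(\lls(\mathcal S_1^j)))_{j=1}^n$ equals twice the winding number of the sail diagram minus one (Proposition~\ref{winding-vortex}), and that winding number equals $n/2-1$ for a convex $n$-gon since its angle sum is $(n-2)\pi$ (Proposition~\ref{proposition-n/2-1}); in the converse direction, this winding count is exactly the hypothesis that feeds the Euclidean existence statement above, so the two missing pieces resolve together.
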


\begin{remark}
Once the last condition is removed, we have a similar statement for locally convex closed broken lines. 
\end{remark}

\begin{remark}
Practically, one may use continued fractions in order to compute the values of continuants. Informally speaking, the continuant of a sequence coincides with the numerator of the continued fraction for the same sequence.
An important rule here is to keep the denominators (without cancelling their factors  with the factors of the numerators). Let us illustrate this with a sample computation of $K(-1,2,-3)$. We have:
$$
[-1;2:-3]=
-1+\frac{1}{\displaystyle 2+\frac{1}{\displaystyle
-3}}
=
-1+\frac{1}{\displaystyle \frac{-6}{-3}+\frac{1}{\displaystyle
-3}}
=
-1+\frac{-3}{-5}
=\frac{5}{-5}+\frac{-3}{-5}
=+\frac{2}{-5}.
$$
Hence $K(-1,2,-3)=2$.
\end{remark}

Let us supplement Theorem~\ref{IKEA-solution}
with the following example.

\begin{example}\label{example-IKEA-1}
Consider a quadrangle  with vertices $A=(4,-1)$, $B=(0,0)$,
$C=(2,3)$, and $D=(3,3)$ as on Figure~\ref{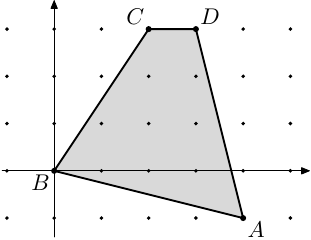}.
\begin{figure}
$$
\includegraphics[width=5cm]{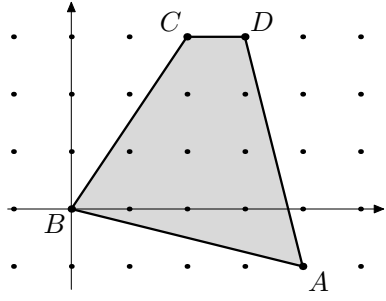}
$$
\caption{The quadrangle $ABCD$.}
\label{figure-4.pdf}
\end{figure}
Direct computations show that the angle-curvature sequence for this quadrangle is as follows:
$$
\begin{array}{r}
\mathcal{S}=\big(\iarctan([1;3:1:1:1]), -1,
\iarctan([3]), -2,
\iarctan([1;2,1]), -1,   \qquad\quad\\
\iarctan([3;1:3]),-1 
\big). 
\end{array}
$$
Let us check the first condition of Theorem~\ref{IKEA-solution}.
Indeed we have
$$
K(\lls(\mathcal S_1^4))=K(1,3,1,1,1,-1,3,-2,1,2,1,-1,3,1,3)=0.
$$
The second condition is also true:
$$
\aee_4=-1=-\frac{17}{14}=
-\left\lfloor
\frac{K(3,-2,1,2,1,-1,3,1,3,1)}{K(3,-2,1,2,1,-1,3,1,3)}
\right\rfloor
=
\left\lfloor
\frac{K(\lls(\mathcal S_2^4)\concat (1))}{K(\lls(\mathcal S_2^4))}
\right\rfloor
.
$$
For the last condition the corresponding sequence of continuants is as follows:
$$
\begin{array}{l}
K(\lls(\mathcal S_1^1))=K(1,3,1,1,1)=14;\\
K(\lls(\mathcal S_1^2))=K(1,3,1,1,1,-1,3)=-1;\\
K(\lls(\mathcal S_1^3))=K(1,3,1,1,1,-1,3,-2,1,2,1)=-15;\\
K(\lls(\mathcal S_1^4))=K(1,3,1,1,1,-1,3,-2,1,2,1,-1,3,1,3)=0.\\
\end{array}
$$
We have a sequence that changes the sign once, exactly $n-3=4-3=1$ times, and thus the last condition also holds.
\end{example}

\subsection{Explicit construction of the last entry in the angle-curvature sequence}

Let us fix some notation and definitions required for the next theorem.
Let $S$ be some finite sequence. We write $S^t$ for the reversed sequence, and we write $-S$ for the sequence with negated terms. 

\begin{definition}\label{angle-sequence}
Let $(a_0,\ldots, a_n)$ 
be an arbitrary sequence of integers.
Set
$$
\angle (a_0,\ldots, a_n) =\angle AOB,
$$
where $A=(1,0)$, $O=(0,0)$, and 
$C=\big(K(a_1,\ldots,a_{2n}),K(a_0,a_1,\ldots,a_{2n})\big)$.
\end{definition}

Below is the second main result of the paper.

\begin{theorem}\label{IKEA-2}
    Given arbitrary non-zero angles $\alpha_1,\dots,\alpha_{n}$ and integers $\aee_1,\ldots, \aee_{n-1}$ where $n\geq2$, there exists a unique locally convex $(n+1)$-gon whose angle-curvature sequence is 
    $$
    \mathcal{S}=(\alpha_1,\aee_1,\ldots, ,\alpha_{n-1}\aee_{n-1}, \alpha_n, x, \beta, y)
    $$
    for some integers $x$ and $y$ and an integer angle $\beta$.
    \\
    The values of $x$, $y$,  and $\beta$ are defined as follows:

    $$
    x = -\left\lfloor \frac{K(U\concat (1))}{K(U)} \right\rfloor
    , \qquad
    \beta \cong \angle\big(-U^t\big)
    ,\qquad
    y = -\left\lfloor \frac{K(V\concat (1))}{K(V)} \right\rfloor,
    $$
where
    $$
\begin{array}{l}
    U=\lls(\mathcal{S}_1^n)=\lls(\alpha_1)\concat (\aee_1)\concat \cdots \concat \lls(\alpha_{n-1})\concat(\aee_{n-1})\concat \lls(\alpha_{n});
\\ 
V=\lls(\mathcal{S}_2^{n+1})=\lls(\alpha_2)\concat (\aee_2)\concat \cdots \concat \lls(\alpha_{n-1})\concat (\aee_{n-1})
\concat \lls(\alpha_{n})\\
\qquad\qquad\quad\qquad\qquad\qquad\qquad
\qquad\qquad\qquad\qquad\qquad\qquad\concat (x)\concat \lls(\beta).
\end{array}
    $$
\end{theorem}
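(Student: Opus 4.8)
The plan is to construct the desired $(n+1)$-gon explicitly and then verify that it is the unique one, using Theorem~\ref{IKEA-solution} as the guiding structural result. I would begin by building a locally convex broken line $A_0A_1\ldots A_nA_{n+1}$ realizing the prescribed initial data $\alpha_1,\aee_1,\ldots,\alpha_{n-1},\aee_{n-1},\alpha_n$. The existence of such a broken line with a freely chosen starting edge is the easy part: one places the vertices one at a time, using that $\itan$ takes every rational value at least $1$ (so each angle $\alpha_i$ is realizable) and that the chord curvatures $\aee_i$ fix the relative directions of consecutive sails via Definition~\ref{chord-curvature-ABCD}. The sequence $U=\lls(\mathcal S_1^n)$ precisely encodes this broken line in the sense of the continuant machinery, so that the terminal vertex $A_{n+1}$ is determined by the continuants of $U$.

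Next I would reduce closure of the polygon to the first condition of Theorem~\ref{IKEA-solution}, namely $K(\lls(\mathcal S_1^{n+1}))=0$. The content is to show that, given the already-fixed prefix $U$, there is exactly one triple $(x,\beta,y)$ of remaining curvature, angle, and final curvature data making the broken line close up into a locally convex polygon. The value $x$ is forced as the unique integer curvature that lets the sail beginning at $A_{n+1}$ be appended in a locally convex way while heading back toward $A_0$; this is where the formula $x=-\lfloor K(U\concat(1))/K(U)\rfloor$ comes from, the floor selecting the unique admissible integer. Once $x$ is fixed, the requirement that the last vertex $A_{n+1}$ together with the closing edge return the direction to the starting edge $A_0A_1$ forces the angle $\beta$ at $A_{n+1}$; I expect the identity $\beta\cong\angle(-U^t)$ to follow by reading off the direction vector $C=(K(a_1,\ldots,a_{2n}),K(a_0,\ldots,a_{2n}))$ from Definition~\ref{angle-sequence}, with the reversal $U^t$ accounting for traversing the sail in the opposite orientation and the negation $-U$ accounting for the orientation reversal of the angle upon closure.

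With $x$ and $\beta$ determined, the final curvature $y$ is pinned down by the same closure constraint applied at the remaining edge, yielding $y=-\lfloor K(V\concat(1))/K(V)\rfloor$ exactly as $x$ was obtained, but now with the completed sequence $V=\lls(\mathcal S_2^{n+1})$ in place of $U$; structurally this is the second condition of Theorem~\ref{IKEA-solution} applied to the closed configuration. Uniqueness then amounts to observing that each of $x$, $\beta$, $y$ was the \emph{only} choice compatible with local convexity and closure at each step, so no freedom remains. I would verify directly that the resulting $K(\lls(\mathcal S_1^{n+1}))$ vanishes, confirming closure, and that the configuration is genuinely locally convex rather than merely closed.

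The main obstacle I anticipate is the bookkeeping in identifying $\beta$ via the continuant formula of Definition~\ref{angle-sequence}: one must show that the direction of the closing edge, expressed through continuants of the accumulated sequence $U$, is integer-congruent to $\angle(-U^t)$, and getting the reversal-and-negation precisely right requires care with how the continuant transforms under $S\mapsto S^t$ and $S\mapsto -S$. The relevant symmetry is that $K(S^t)=K(S)$ together with the controlled behavior of $K(-S)$, but tracking the induced transformation on the terminal direction vector $C$, and checking that the floor function selects the locally convex branch at each of the two curvature computations, is where the genuine work lies; the rest reduces to the already-established Theorem~\ref{IKEA-solution} and the congruence criterion.
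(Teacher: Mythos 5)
Your overall strategy---build the broken line from the prescribed data, observe that closure and local convexity force the completion $(x,\beta,y)$, and read the formulas off the continuant machinery---is essentially the same route the paper takes: its proof obtains $x$ and $y$ from Corollary~\ref{aee-formula-corollary} and obtains $\beta$ by a reflection/reversal argument. The problem is that your write-up defers exactly the two steps that constitute the actual proof, and says so explicitly: you assert that the floor formula is ``where the formula comes from, the floor selecting the unique admissible integer,'' and that you ``expect'' $\beta\cong\angle\big(-U^t\big)$ to follow, while conceding that verifying both ``is where the genuine work lies.'' Neither step is carried out, and neither is routine.

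Concretely, first: the claim that the chord curvature at the closing edge equals $-\left\lfloor K(U\concat(1))/K(U)\right\rfloor$ does not follow formally from ``the floor selects the admissible integer.'' In the paper it rests on Proposition~\ref{properties-aeeb}: one identifies the curvature with the determinant $\det(B_iB_i^+,B_i\hat B_i)$ (items (i), (ii), (iv)), shows that $\hat B_i$ has second coordinate $K(\lls(\mathcal S_1^i)\concat(1))$ (item (vi)), shows that $B_i^+$ and $\hat B_i$ lie on a common line at unit integer distance from $OB_i$ (item (v)), and pins the second coordinate of $B_i^+$ into the half-open interval between $0$ and $K(\lls(\mathcal S_1^i))$ (item (vii)); only after all of this does the determinant become the stated floor, which is then Corollary~\ref{aee-formula-corollary}. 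Without this geometric analysis the formulas for $x$ and $y$ are assertions, not derivations. Second: for $\beta$ one needs an actual argument that the closing angle is the angle subtended by the already-built diagram traversed with reversed orientation; the paper does this by reflecting the diagram about the $x$-axis (which negates the LLS sequence, giving $-\beta\cong\angle(-U)$) and then transposing, so that $\beta\cong\big(\angle(-U)\big)^t=\angle\big(-U^t\big)$. Your sketch names the right ingredients ($S\mapsto S^t$ and $S\mapsto -S$, and the continuant symmetry $K(S^t)=K(S)$) but never establishes which geometric operation corresponds to which transformation of the sequence---precisely the sign and orientation bookkeeping you flag as unresolved. As it stands, the proposal is a correct plan whose two load-bearing steps are missing.
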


\begin{remark}
Note that
$$
y=\left\lfloor \frac{K(-U^t\concat (-1))}{K(-U^t)} \right\rfloor
$$
in the notation of the above theorem.
\end{remark}

\begin{example}
Consider three angles $\alpha_1, \alpha_2$ and 
$\alpha_3$ with LLS sequences $(1,3,1,1,1)$, $(3)$, and $(1,2,1)$.
Let also $\aee_1=-1$ and $\aee_2=-2$.
Then
  \begin{align*}
    U&=\lls(\alpha_1)\concat (\aee_1)\concat \lls(\alpha_2)\concat (\aee_{2})\concat \lls(\alpha_{3})\\
    &=(1,3,1,1,1) \concat(-1)\concat(3)\concat(-2)\concat(1,2,1)\\
    &=(1,3,1,1,1,-1,3,-2,1,2,1)
  \end{align*}
and hence
$$
 x = -\left\lfloor \frac{K(U\concat (1))}{K(U)} \right\rfloor
=
-\left\lfloor \frac{K(1,3,1,1,1,-1,3,-2,1,2,1,1)}{K(1,3,1,1,1,-1,3,-2,1,2,1)} \right\rfloor
=
-\left\lfloor \frac{-26}{-15} \right\rfloor
=-1.
$$
Further we have
$$
-U^t= (-1, -2, -1, 2, -3, 1, -1, -1, -1, -3, -1).
$$
Now we compute the continued fraction corresponding to the last sequence:
$$
[-1; -2: -1: 2: -3: 1: -1: -1: -1: -3: -1]=\frac{15}{-11}
$$
(here we take care of the signs for continuants in the fraction).
Then $\beta$ is integer congruent to the angle with vertices
$$
(1,0), \quad (0,0), \quad \hbox{and} \quad (-11, 15
).
$$
In terms of arctangents we have
$$
\beta \cong \iarctan\Big(\frac{15}{-11 (\MOD 15)}\Big)
=
\iarctan\Big(\frac{15}{4}\Big).
$$
Since $15/4=[3;1;3]$, we get $\lls(\beta)=(3,1,3)$.
Finally,
$$
    V=\lls(\alpha_2)\concat(\aee_2)\concat \lls(\alpha_3)\concat (x)\concat \lls(\beta)=(3,-2,1,2,1,-1,3,1,3),
    $$
and hence
$$
 y = -\left\lfloor \frac{K(V\concat (1))}{K(V)} \right\rfloor
=
-\left\lfloor \frac{K(3,-2,1,2,1,-1,3,1,3,1)}{K(3,-2,1,2,1,-1,3,1,3)} \right\rfloor
=
-\left\lfloor \frac{-17}{-14} \right\rfloor
=-1.
$$
(This example correspond to the quadrangle of Example~\ref{example-IKEA-1}, the quadrangle is shown on
Figure~\ref{figure-4.pdf} above.)
\end{example}

\section{Further tools needed for the proof}
\label{Further tools needed for the proof}

This subsection contains some extra definitions, notions, and 
statements that are used in the proofs of the next section.

\subsection{Sail diagrams for broken lines}

Let us generalise the notion of the sail for an integer angle to the case of broken lines with integer vertices.

\begin{definition}
The {\it sail diagram} of a broken line (closed or not closed) with integer vertices is the broken line obtain by the sails of the  consecutive angles of the broken line shifted to the origin (in composition with the central symmetry for all even ones). 
\\
The vertices of the sail diagram corresponding to the edges of the angles are called {\it edge vertices}.
\end{definition}

\begin{remark}
The sail diagram for a broken line coincides with the sail diagram of the sum of its interior angles corresponding to integer parallel transform of these angles to the origin and symmetries about the origin for angles with even numbers.     
\end{remark}

\begin{remark}
In case of a convex polygon with even number of vertices its sail diagram is a closed broken line. If the number of vertices is odd, then the starting and the end points of the sail diagram are two points symmetric with respect to the origin.
\end{remark}

\begin{remark}
    Sail diagrams have a flavour of Maxwell reciprocal diagrams in rigidity theory.
\end{remark}

\begin{example}
Let us consider a pentagon (see on Figure~\ref{figure-2-3.pdf}, Left) with vertices 
$$
(8,0), \quad
(0,0), \quad
(2,3), \quad
(3,4), \quad \hbox{and} \quad
(5,3).
$$
Then its angle-curvature sequence is
$$
(\iarctan(3/2), -2, \iarctan(1), -4, \iarctan(3), -2, \iarctan(1), -3, \iarctan(1), 0).
$$
The corresponding sail diagram (see on Figure~\ref{figure-2-3.pdf}, Right) has vertices
$$
(1,0), \quad 
(1,1), \quad  
(2,3), \quad 
(-1,-1), \quad 
(2,-1), \quad 
(-1,1), \quad \hbox{and} \quad
(-1,0). 
$$
All these vertices except $(1,1)$ are edge vertices.

\begin{figure}
    \centering
    \includegraphics[height=5.5cm]{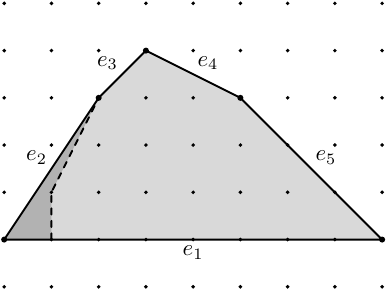}
    \hspace{2em}
    \includegraphics[ bb = 0 4 143 143, height=5.5cm]{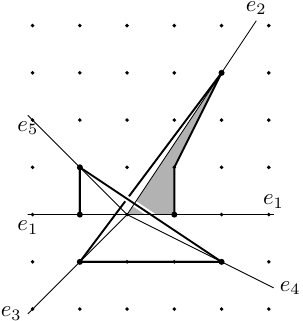}
    \caption{A pentagon with marked (on the left) edges and its sail diagram (on the right).
We indicate one of the angles and the corresponding angle of the diagram with dark grey.}
\label{figure-2-3.pdf}
\end{figure}

\end{example}

\subsection{Vortex broken lines, 
their tangents and LLS sequences} 

Let us give a formal definition of a broken line bending counterclockwise with respect to some point.

\begin{definition}
We say that a broken line $A_1\ldots A_n$ 
is a {\it vortex broken line about $O$} if 
$$
\det (OA_i,OA_{i+1})>0
\qquad
\hbox{for $i=1,\ldots,n-1$.}
$$
\end{definition}

It turns out that the notion of LLS sequences is naturally extended to vortex broken lines (for further details see Chapter~15 of~\cite{karpenkov2022geometry}).

\begin{definition}\label{lls-brokenline-def}
For some integer vortex broken line $A_0\dots A_m$ we define the \textit{LLS sequence} of the broken line as the sequence $a_0,\dots,a_{2n}$ where
$$
\begin{array}{rcl}
a_{2k}&=&\det(OA_k,OA_{k+1})=\il(A_kA_{k+1}) \quad \hbox{for $k=0,1,\ldots, n$};
\\
a_{2k-1}&=&\displaystyle\frac{\det (A_{k+1}A_{k},A_{k+1}A_{k+2})}{a_{2k-1}a_{2k+1}}
\quad \hbox{for $k=1,2,\ldots,n$}.
\end{array}
$$
We also write $\lls(A_0\dots A_m)=(a_0,\dots,a_{2n})$.
\end{definition}

Let us finally recall the following important expression for the vertex coordinates of vortex broken lines (in fact the original statement is for a broader selection of broken lines).

\begin{theorem}{\bf(\cite[Theorem~15.10]{karpenkov2022geometry})}
\label{Geometry-of-broken-lines}
 Consider a vortex broken line $A_0\ldots A_{n}$ about the origin
with LLS sequence
$$
(a_0,a_1,\ldots,a_{2n-2}).
$$
Let also $A_0=(1,0)$ and $A_1=(1,a_0)$. Then
$$
A_i=\big(K(a_1,\ldots,a_{2i-2}),K(a_0,a_1,\ldots,a_{2i-2})\big) \quad \hbox{for $i=1,\ldots, n$}.
$$
 \qed
\end{theorem}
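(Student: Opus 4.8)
The plan is to realize the vertices $A_i$ together with the primitive edge directions of the broken line as the consecutive terms of a single vector sequence governed by the three-term continuant recurrence, and then to match this with the continuant formula by induction (throughout I identify each $A_j$ with its position vector $OA_j$). For $k=0,\dots,n-1$ let $e_k$ be the primitive integer vector pointing from $A_k$ to $A_{k+1}$; since $a_{2k}=\il(A_kA_{k+1})$ by Definition~\ref{lls-brokenline-def}, we have the \emph{even step}
$$
A_{k+1}=A_k+a_{2k}e_k,
$$
and the normalization $A_0=(1,0)$, $A_1=(1,a_0)$ forces $e_0=(0,1)$. Setting, for $m\ge-1$,
$$
v_m=\big(K(a_1,\dots,a_m),\,K(a_0,a_1,\dots,a_m)\big)
$$
with the conventions $K(\emptyseq)=1$ and the length-$(-1)$ continuant equal to $0$, we get $v_{-1}=(0,1)=e_0$ and $v_0=(1,a_0)=A_1$; moreover, because the continuant recurrence acts in the same way on both coordinates, $v_m=a_mv_{m-1}+v_{m-2}$ for every $m\ge1$.

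Now index the geometric data so that $A_{k+1}$ occupies position $2k$ and $e_k$ position $2k-1$. The even step above is precisely $v_{2k}=a_{2k}v_{2k-1}+v_{2k-2}$ read on vertices and directions, so it remains to prove the \emph{odd step}
$$
e_{k+1}=a_{2k+1}A_{k+1}+e_k\qquad(k=0,\dots,n-2),
$$
which is the companion relation $v_{2k+1}=a_{2k+1}v_{2k}+v_{2k-1}$. Granting it, the sequences $(v_m)$ and $(A_{k+1},e_k)$ agree at $m=-1,0$ and satisfy the same second-order recurrence, hence coincide term by term; in particular $A_i=v_{2i-2}$ for $i=1,\dots,n$, as claimed.

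I expect the odd step to be the main obstacle, and I would settle it by determinant bookkeeping that exploits the vortex normalization $\det(OA_k,OA_{k+1})=a_{2k}$. From $e_k=\tfrac1{a_{2k}}(A_{k+1}-A_k)$ one computes $\det(e_k,A_{k+1})=-1$, so $\{e_k,A_{k+1}\}$ is a basis of $\Z^2$ and $e_{k+1}=\alpha e_k+\beta A_{k+1}$ for unique $\alpha,\beta\in\Z$. The analogous identity $\det(e_{k+1},A_{k+1})=-1$ (using $\det(OA_{k+1},OA_{k+2})=a_{2k+2}$) pairs against $A_{k+1}$ to give $\alpha=1$, while pairing against $e_k$ gives $\beta=-\det(e_k,e_{k+1})$. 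Finally, expanding
$$
\det(A_{k+1}A_k,A_{k+1}A_{k+2})=\det(-a_{2k}e_k,\,a_{2k+2}e_{k+1})=-a_{2k}a_{2k+2}\det(e_k,e_{k+1})
$$
and dividing by $\il(A_{k+1}A_k)\,\il(A_{k+1}A_{k+2})=a_{2k}a_{2k+2}$ identifies the signed entry $a_{2k+1}$ with $-\det(e_k,e_{k+1})=\beta$, closing the odd step. The only delicate points are the sign conventions, which are carried automatically by the signed determinant definition of the odd LLS entries, and the boundary continuant values, which are pinned down by the stated conventions.
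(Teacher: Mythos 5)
The paper itself contains no proof of this statement: it is imported from \cite[Theorem~15.10]{karpenkov2022geometry} and stamped as known, so there is no internal argument to compare yours against. Judged on its own, your proof is correct and self-contained. The interleaving of vertices and primitive edge directions into a single sequence $(v_m)$ obeying the continuant recurrence $v_m=a_mv_{m-1}+v_{m-2}$, with base cases $v_{-1}=(0,1)=e_0$ and $v_0=(1,a_0)=A_1$, is exactly the mechanism that produces the continuant formula, and your determinant bookkeeping for the odd step is sound: $\det(e_k,A_{k+1})=\det(e_{k+1},A_{k+1})=-1$ both follow from $\det(OA_j,OA_{j+1})=a_{2j}$, so $\{e_k,A_{k+1}\}$ is a unimodular basis, and pairing against $A_{k+1}$ and $e_k$ forces $\alpha=1$ and $\beta=-\det(e_k,e_{k+1})=a_{2k+1}$. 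Two remarks. First, the formula for the odd entries in Definition~\ref{lls-brokenline-def} is misprinted in the paper (it defines $a_{2k-1}$ in terms of itself, with shifted indices); you have tacitly read it in the intended way, as $a_{2k+1}=\det(A_{k+1}A_k,A_{k+1}A_{k+2})/(a_{2k}a_{2k+2})$, which is the reading under which your sign conventions close, and you also use the unit-distance normalization $\det(OA_k,OA_{k+1})=\il(A_kA_{k+1})$ that is built into that definition --- both uses are legitimate since they are part of the paper's stated setting, but they deserve to be flagged. Second, your recurrence is the column-vector form of the matrix identity the paper records as Proposition~\ref{MatrixForm}: it says precisely that $A_{k+1}$ and $e_k$ arise by applying products of the matrices $\begin{pmatrix}0&1\\1&a_i\end{pmatrix}$ to $(1,0)^{T}$, so your route is the standard matrix/continuant one, but derived from first principles, with the vortex geometry supplying the two determinant identities rather than being assumed.
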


\begin{remark}
The LLS sequence is the complete invariant of integer congruence types of integer winding broken lines. 
Namely, two integer winding broken lines are integer congruent if and only if they have the same LLS sequence. In addition, an arbitrary sequence of odd length of integers with positive odd elements is the LLS sequence for some integer winding broken line. 
\end{remark}

\begin{remark}
In accordance with Definition~\ref{angle-sequence}, we have
$$
\angle (a_0,a_1,\ldots,a_{2n-2})
\cong\angle A_{0}OA_{n}.  
$$
\end{remark}

We conclude this section with the following general proposition.

\begin{proposition}
Any sail diagram is a vortex broken line about the origin.
In addition, all the edges of the sail diagram at at the unit distance to the origin.
\end{proposition}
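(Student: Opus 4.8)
The plan is to reduce the claim to properties of a single sail and then to understand how consecutive sails are glued together. First I would record the classical property that every edge of the sail of a rational angle lies on a line at unit integer distance from the vertex of the angle: if some edge lay on an integer line at integer distance $d\ge 2$ from the vertex, then the parallel integer line at distance $1$ would still meet the interior of the angle and carry integer points of the angle strictly closer to the vertex than that edge, contradicting the fact that the edge bounds the convex hull of all integer points of the angle. Since both the translation taking the vertex to the origin and the central symmetry about the origin preserve integer distances to the origin, every edge of the sail diagram is at unit integer distance from the origin, which is the second assertion of the proposition.

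Next I would prove the vortex inequality edge by edge. Consider one sail with its vertex translated to the origin, and two consecutive edge vertices $P_k,P_{k+1}$ of that sail. The integer area $\is\triangle OP_kP_{k+1}$ equals the integer length $\il(P_kP_{k+1})$ times the integer distance from $O$ to the line $P_kP_{k+1}$, and the latter distance equals $1$ by the previous paragraph; hence $|\det(OP_k,OP_{k+1})|=\il(P_kP_{k+1})>0$. Orienting each sail so that it is traversed counterclockwise about the origin makes this determinant positive, so the vortex inequality $\det(OP_k,OP_{k+1})>0$ holds along every edge lying inside a single sail.

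The main step, and the one I expect to be the only delicate point, is to check that consecutive sails fit together without breaking the winding. The terminal edge of the angle at $A_i$ and the initial edge of the angle at $A_{i+1}$ both run along the common segment $A_iA_{i+1}$ but in opposite directions, so after the two vertices are brought to the origin the corresponding terminal and initial edge vertices are interchanged by the central symmetry about $O$. Applying this symmetry to every second sail therefore accomplishes two things at once: it makes the terminal edge vertex of each sail coincide with the initial edge vertex of the next, gluing the sails into a single broken line, and it makes all sails wind about $O$ in the same counterclockwise sense. Consequently every edge of the sail diagram is an edge of one of the reoriented sails, and by the previous paragraph each such edge satisfies $\det(OP_k,OP_{k+1})>0$; thus the sail diagram is a vortex broken line about the origin. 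This is exactly where local convexity of the underlying broken line and the parity convention of the central symmetries enter.

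Alternatively, one may argue through LLS sequences: the sail diagram has LLS sequence equal to the concatenation $\lls(\alpha_1)\concat(\aee_1)\concat\cdots\concat\lls(\alpha_n)$, in which the curvatures $\aee_i$ occupy the sine-positions at the junction vertices while all length-positions are filled by genuine integer lengths of sail edges and are therefore positive. Since this sequence has odd length with positive length-entries, the characterization of LLS sequences of integer vortex broken lines together with Theorem~\ref{Geometry-of-broken-lines} reconstructs it as a vortex broken line about the origin in which $\det(OA_k,OA_{k+1})=\il(A_kA_{k+1})$, hence every edge lies at unit distance; identifying this reconstruction with the sail diagram then yields both assertions simultaneously.
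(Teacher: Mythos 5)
Your proof is correct and takes essentially the same route as the paper: the paper's argument is simply that the sail of any integer angle is a vortex broken line with edges at unit distance to the origin, and that the sail diagram, being a union of such sails, inherits both properties. The only difference is one of detail---you make explicit the unit-distance lemma and the gluing of consecutive sails via the central-symmetry convention, which the paper compresses into a two-line observation.
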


\begin{proof}
Note that the sail for any integer angle is a vortex broken line with edges at the unit distance to the origin.
Since the sail diagram is the union of several sails, it also satisfies the conditions of the proposition.
\end{proof}

\subsection{Winding number of sail diagrams and its properties}

Denote by $\mu(\varphi)$ the Euclidean angular measure of the the angle $\varphi$. 

\begin{definition}
The {\it winding number} of a vortex broken line $A_0\ldots A_n$ about a point $O$ is the following number
$$
\omega(A_0\ldots A_n, O)=
\frac{1}{2\pi}\sum\limits_{i=1}^n \mu(\angle A_{i-1}OA_i).
$$
\end{definition}

In general, the winding number is not an invariant of lattice geometry except for the case when it is half an integer.
(The last correspond to the projectivisation of the Gauss map to the projective circle, and therefore it is a topological invariant). That is precisely the case of the sail diagrams of convex polygons.

\begin{proposition}\label{proposition-n/2-1}
The winding number of the sail diagram of the convex $n$-gon
equals $n/2-1$.
\end{proposition}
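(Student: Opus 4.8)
The plan is to interpret the winding number as the total Euclidean turning of the Gauss map along the boundary of the convex polygon, and to show that the sail diagram records exactly this turning. First I would observe that the winding number $\omega$, being equal to a half-integer in this case, is a topological invariant: it equals $\frac{1}{2\pi}$ times the total angular sweep of the rays $OA_i$ as the sail diagram is traversed. The key is to relate this sweep to the exterior angles of the original $n$-gon. Recall from the construction of the sail diagram that each interior angle $\angle A_{i-1}A_iA_{i+1}$ of the polygon is translated to the origin (with a central symmetry applied for the even-indexed angles), and the sails are concatenated. I would track how much the direction of the ray from $O$ advances as we pass through one sail and then cross the gap to the next.

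The main computation is the following bookkeeping. For a convex polygon the sum of the Euclidean exterior angles equals $2\pi$, equivalently $\sum_{i=1}^n \bigl(\pi - \mu(\angle A_{i-1}A_iA_{i+1})\bigr) = 2\pi$, so that $\sum_{i=1}^n \mu(\alpha_i) = (n-2)\pi$ is the classical ``sum of interior angles'' identity. Each sail of $\alpha_i$ sweeps an angular measure of exactly $\mu(\alpha_i)$ about the origin, since the sail of a rational angle subtends precisely that angle at its vertex. Summing the sweeps of all $n$ sails therefore contributes $\sum_{i=1}^n \mu(\alpha_i) = (n-2)\pi$ to the total turning. Dividing by $2\pi$ gives $n/2 - 1$, which is the claimed value. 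The role of the alternating central symmetries is to ensure that consecutive sails abut consistently so that the partial sweeps add rather than cancel; I would verify that with the symmetry convention the edge vertices line up so the traversal is genuinely vortex (monotone in angle), which is already guaranteed by the proposition asserting that every sail diagram is a vortex broken line about the origin.

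The step I expect to be the main obstacle is making rigorous the claim that the angular sweep contributed by each individual sail is exactly $\mu(\alpha_i)$, with no extra or missing contribution coming from the junctions between successive sails. Concretely, one must check that the terminal edge direction of the $i$-th sail coincides, after the prescribed shift and parity symmetry, with the initial edge direction of the $(i{+}1)$-th sail, so that the transition across a junction adds zero to the sweep. This is exactly the compatibility condition encoded in the definition of the sail diagram and in the convexity of the polygon; I would argue it by noting that the last edge of the sail of $\angle A_{i-1}A_iA_{i+1}$ and the first edge of the sail of $\angle A_iA_{i+1}A_{i+2}$ are both directed along the common side $A_iA_{i+1}$ of the polygon (up to the sign fixed by the parity symmetry), hence give the same ray from the origin after translation. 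Once this gluing is established, the telescoping of the per-sail sweeps yields the total $(n-2)\pi$ and the proposition follows. A subtlety to handle carefully is the half-integrality: because $n$ may be odd, the sail diagram need not close up, its endpoints being antipodal, and I would confirm that this is consistent with $n/2-1$ being a half-integer precisely when $n$ is odd.
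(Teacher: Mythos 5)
Your proposal is correct and takes essentially the same approach as the paper, whose entire proof is that the sum of the angles of the sail diagram equals the sum of the interior angles of the $n$-gon, namely $(n-2)\pi$, whence the winding number is $n/2-1$. Your careful verification that the junctions between consecutive sails contribute zero angular sweep (via the alternating central symmetries) is precisely the detail the paper dismisses as ``clear,'' so it is a fleshed-out version of the same argument rather than a different route.
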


\begin{remark}
In particular, the winding number of the sail diagram of a convex polygon is its invariant in integer geometry.   
\end{remark}

\begin{proof}
It is clear that the sum of angles for the sail diagram equals the sum of angles of the $n$-gon. Namely, it is equivalent to $(n-2)\pi$ and hence the winding number equals $n/2-1$.
\end{proof}

\begin{proposition}\label{winding-vortex}
Let $A_0\ldots A_n$ be a sail diagram for an $n$-gon with
an angle-curvature sequence 
$$
\mathcal{S}=(\alpha_1,\aee_1,\ldots, \alpha_{n-1},\aee_{n-1}, \alpha_n, \aee_n).
$$ 
Then, the the number of sign changes for the sequence $(\lls(\mathcal S_1^j))_{j=1}^{n}$ equals
twice the winding number of $A_0\ldots A_n$ about the origin
minus 1. 
$($Here $\mathcal S_1^j$ are as in Definition~\ref{mathcal-S-n}.$)$
\end{proposition}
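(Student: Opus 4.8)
The plan is to realize each continuant $K(\lls(\mathcal S_1^j))$ as the second coordinate of a concrete vertex of the sail diagram, and then to read off the sign changes as crossings of the $x$-axis by a monotonically winding broken line. First I would normalise coordinates so that the sail diagram starts at $A_0=(1,0)$; this is harmless, since both the winding number and the number of sign changes are invariant under the transformations involved. For each $j$ the sequence $\lls(\mathcal S_1^j)$ is, by the way the angle-curvature sequence assembles the LLS sequence of the diagram, exactly the LLS sequence of the initial arc running from $A_0$ to the edge vertex $P_j$ at which the sail of the $j$-th angle ends. Applying Theorem~\ref{Geometry-of-broken-lines} to this arc, whose LLS sequence is $(a_0,\dots,a_{2m})=\lls(\mathcal S_1^j)$, its terminal vertex $P_j$ has coordinates $\big(K(a_1,\dots,a_{2m}),K(a_0,\dots,a_{2m})\big)$, so that its second coordinate is precisely $K(\lls(\mathcal S_1^j))$. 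Since the whole sail diagram is a vortex broken line about the origin, the arguments $\arg P_1<\arg P_2<\dots<\arg P_n$ strictly increase, and each $P_j$ is a nonzero lattice point, whence
$$
\sgn\!\big(K(\lls(\mathcal S_1^j))\big)=\sgn\big(\sin(\arg P_j)\big).
$$

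Next I would locate these arguments on the winding. Using the interpretation of the sail diagram as the concatenation of the interior angles of the polygon transported to the origin (with the central-symmetry convention on the even-indexed sails), the arc of the diagram from $P_{j-1}$ to $P_j$ sweeps exactly the $j$-th interior angle of the polygon, of Euclidean measure $\theta_j$. Hence $\arg P_j=\theta_1+\dots+\theta_j$, the total satisfies $\arg P_n=\sum_i\theta_i=2\pi\,\omega$ with $\omega=\omega(A_0\dots A_n,O)$, and, because the polygon is convex, each increment obeys $0<\theta_j<\pi$. Recall that the winding number of the sail diagram of a convex polygon is a half-integer, so $2\omega\in\Z$ and $\arg P_n=2\pi\omega$ is an integer multiple of $\pi$; in particular $K(\lls(\mathcal S_1^n))=0$, matching the first condition of Theorem~\ref{IKEA-solution}.

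Finally I would count. The sign of $K(\lls(\mathcal S_1^j))$ flips precisely when $\arg P_j$ passes an integer multiple of $\pi$, and since consecutive arguments differ by less than $\pi$ no multiple of $\pi$ is skipped between two successive $P_j$. Therefore, after deleting the zero terms — which occur exactly when some $P_j$ lands on the $x$-axis — the number of sign changes of $\big(K(\lls(\mathcal S_1^j))\big)_{j=1}^{n}$ equals the number of multiples of $\pi$ strictly inside $(0,2\pi\omega)$. As $\arg P_1=\theta_1\in(0,\pi)$ makes the first term positive and $\arg P_n=2\pi\omega$ supplies the deleted final zero, these multiples are exactly $\pi,2\pi,\dots,(2\omega-1)\pi$, giving $2\omega-1$ sign changes, as asserted. (Combined with Proposition~\ref{proposition-n/2-1}, which gives $\omega=n/2-1$, this yields the $n-3$ count used in Theorem~\ref{IKEA-solution}.)

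The main obstacle I anticipate is the middle step: justifying rigorously that each arc $P_{j-1}P_j$ of the sail diagram sweeps one and only one interior angle of the polygon — equivalently, that the curvature links $\aee_i$ together with the central-symmetry convention on the even-indexed sails contribute no net rotation beyond the angles themselves. This upgrades the global sum-of-angles identity behind Proposition~\ref{proposition-n/2-1} to a per-angle statement, and it is exactly what guarantees that every increment stays below $\pi$, so that the sign changes of the continuants faithfully record each $x$-axis crossing without over- or under-counting.
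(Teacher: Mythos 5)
Your proposal is correct and follows essentially the same route as the paper's proof: normalize the sail diagram to start at $(1,0)$, use Theorem~\ref{Geometry-of-broken-lines} to identify each continuant $K(\lls(\mathcal S_1^j))$ with the second coordinate of an edge vertex, and then count $x$-axis crossings of a monotonically winding broken line whose angular increments are the interior angles, each less than $\pi$ by convexity, finishing the general case by integer-congruence invariance. The per-angle sweep fact you flag as the main obstacle is immediate from the construction of the sail diagram (each sail, translated to the origin and centrally reflected for even indices, spans exactly its angle), which is precisely what the paper invokes when it asserts that ``we always add angles that are less than $\pi$.''
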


\begin{proof}
Let us first consider a sail diagram whose first two vertices are $A_0=(1,0)$ and $A_1=(1,a_0)$. 

By Theorem~\ref{Geometry-of-broken-lines}, the sequence $(\lls(\mathcal S_1^j))_{j=1}^{n}$
coincides with the sequence of $y$ coordinates of the vertices of the broken line connecting all the edge vertices of the sail diagram.
Since the original polygon is convex, this 
broken line makes precisely $n-2$ half-twists around the origin.
Again, due to convexity of the original polygon, we always add angles that are less than $\pi$ and hence the sequence of $y$ coordinates changes sign precisely $n-3$ times. 
Now the proof is concluded by Proposition~\ref{proposition-n/2-1}.

\vspace{2mm}

Let us now study the case of general sail diagrams. First of all, note that
such sail diagrams are integer congruent to some of the diagrams considered above. 
Secondly, both the winding number and the LLS sequence (and hence the number of sign changes) are invariants of integer geometry. 
This concludes the proof in the general case.
\end{proof}

\begin{remark}{\bf Cusps of sail diagrams and polygons.}
We say that the sail diagram has a cusp in edge vertex $B_i$ if
the chord curvature $\aee_i<0$.
Note that any broken line with only positive numbers in the LLS sequence winds less than $\pi$ around the origin. 
Since any $n$-gon does $n-2$ half twists around the origin starting from any vertex (this means that we forget one of the cusps),
the sail diagram has at least $n-1$ cusp.
In case of the number of cusps of the sail polyhedron is smaller than $n-1$, 
the corresponding broken line is still locally convex, however the sum of its angles is smaller than $\pi(n-3)$, and therefore it has self-intersections.
\end{remark}

\subsection{A few words about special integer points on sail diagrams}

Let formulate the following variation of the general classic matrix identity for continuants. 

\begin{proposition}\label{MatrixForm}
Consider an arbitrary sequence of numbers $(b_0,\ldots, b_k)$
and set $b_{-1}=0$.
Then the following identity holds:
$$
\begin{pmatrix}
    K(b_1,\ldots, b_{k}) \\
    K(b_0,\ldots, b_{k})
\end{pmatrix}
=
M_k
\begin{pmatrix}
1 \\
0
\end{pmatrix},
\qquad \hbox{where} \quad
M_k=\prod\limits_{i=0}^{k+1}
\begin{pmatrix}
    0&1 \\
    1&b_{k-i}
\end{pmatrix}
.
\qed
$$
\end{proposition}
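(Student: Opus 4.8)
The plan is to prove the identity by induction on $k$, realising the continuant recurrence directly at the level of the $2\times 2$ matrices. The structural fact that makes this work is that the defining product telescopes: the factors of $M_k$ and of $M_{k-1}$ share the common sub-block coming from the indices $b_{k-1},\ldots,b_0,b_{-1}$, so removing one extremal factor $\begin{pmatrix}0&1\\1&b_k\end{pmatrix}$ leaves exactly the product attached to the shorter sequence. Moreover the auxiliary factor forced by $b_{-1}=0$ is the involution $J=\begin{pmatrix}0&1\\1&0\end{pmatrix}$, whose only effect is to turn $\binom{1}{0}$ into $\binom{0}{1}$; hence $M_k\binom{1}{0}$ is nothing but a designated column of the ``core'' product $\prod_j\begin{pmatrix}0&1\\1&b_j\end{pmatrix}$. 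I would therefore track a single column vector through the induction rather than the full matrix.

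For the base case I would verify $k=0$ by hand: since $b_{-1}=0$, one computes $M_0\binom{1}{0}=\binom{1}{b_0}$, which matches $\bigl(K(\emptyseq),K(b_0)\bigr)$ (and at this length the two continuants in the top slot coincide, so there is no ambiguity to resolve yet). For the inductive step, multiplying the shorter product on the appropriate side by $\begin{pmatrix}0&1\\1&b_k\end{pmatrix}$ sends the tracked column to \emph{(first column)} $+\,b_k\cdot$\emph{(second column)} of the shorter product. This is precisely the continuant recurrence $K(\ldots,b_k)=b_k\,K(\ldots,b_{k-1})+K(\ldots,b_{k-2})$, now applied simultaneously in both coordinates, so both entries propagate to the next length correctly.

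The step I expect to be the real friction is the index and ordering bookkeeping, not any genuine computation. Reading the product in the two natural orders places either $K(b_1,\ldots,b_k)$ or $K(b_0,\ldots,b_{k-1})$ in the top coordinate, while the bottom coordinate is $K(b_0,\ldots,b_k)$ in both cases by the continuant symmetry $K(x_1,\ldots,x_n)=K(x_n,\ldots,x_1)$; the proof must therefore fix the direction in which the product is read and the side on which the $J$ factor sits so as to reproduce exactly the index ranges in the statement. As a cross-check I would reduce to the classical continuant matrix identity $\prod_j\begin{pmatrix}a_j&1\\1&0\end{pmatrix}$ via the conjugation $\begin{pmatrix}0&1\\1&b\end{pmatrix}=J\begin{pmatrix}b&1\\1&0\end{pmatrix}J$, which collapses the whole product up to two outer copies of $J$ and lets the claimed column be read off directly from the known formula, thereby confirming that the two entries are precisely $K(b_1,\ldots,b_k)$ and $K(b_0,\ldots,b_k)$.
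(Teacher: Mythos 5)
Your proposal supplies a proof where the paper, in fact, has none: the \qed{} is part of the statement, which is introduced only as a ``variation of the general classic matrix identity for continuants.'' So the relevant comparison is between your two routes and the classical identity the paper silently invokes. Your conjugation cross-check is exactly that invocation made explicit: since $\begin{pmatrix}0&1\\1&b\end{pmatrix}=J\begin{pmatrix}b&1\\1&0\end{pmatrix}J$ with $J=\begin{pmatrix}0&1\\1&0\end{pmatrix}$ and $J^2=I$, the inner copies of $J$ cancel and the whole product becomes the classical $\prod_{j}\begin{pmatrix}b_j&1\\1&0\end{pmatrix}=\begin{pmatrix}K(b_0,\ldots,b_k)&K(b_0,\ldots,b_{k-1})\\K(b_1,\ldots,b_k)&K(b_1,\ldots,b_{k-1})\end{pmatrix}$ framed by two outer copies of $J$, from which the claimed column is read off. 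Your inductive route is also essentially right, but needs one repair: you say you will ``track a single column vector,'' yet your own inductive step writes the new tracked column as (first column)$\,+\,b_k\cdot$(second column) of the shorter product, so a one-column hypothesis does not close on itself. Either carry the full $2\times2$ matrix through the induction, or use your telescoping observation to note that the first column of the length-$(k-1)$ product is the tracked column of the length-$(k-2)$ product, and run a two-step induction on $v_k=b_kv_{k-1}+v_{k-2}$ with $v_{-1}=\binom{0}{1}$, $v_0=\binom{1}{b_0}$; both entries of $v_k$ then obey precisely the continuant recurrence, as you intend.

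The ``friction'' you flag is moreover a genuine defect of the statement, not optional polish. Under the literal left-to-right reading of $\prod_{i=0}^{k+1}\begin{pmatrix}0&1\\1&b_{k-i}\end{pmatrix}$, i.e.\ factors $b_k,b_{k-1},\ldots,b_0,b_{-1}$, one gets $M_k\binom{1}{0}=\begin{pmatrix}K(b_0,\ldots,b_{k-1})\\K(b_0,\ldots,b_k)\end{pmatrix}$; already at $k=1$ the top entry is $b_0$, not $K(b_1)=b_1$. The identity exactly as printed holds when the factors are multiplied in increasing order of subscript with the $b_{-1}=0$ factor at the far right, $M_k=\begin{pmatrix}0&1\\1&b_0\end{pmatrix}\cdots\begin{pmatrix}0&1\\1&b_k\end{pmatrix}J$, which is also the convention consistent with the later use of the proposition, where points of the form $\bigl(K(b_1,\ldots),K(b_0,\ldots)\bigr)$ are produced in Proposition~\ref{properties-aeeb}. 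Only the top entry detects this choice, because the bottom one is protected by the symmetry $K(x_1,\ldots,x_n)=K(x_n,\ldots,x_1)$; fixing the convention, as you propose, is therefore the one substantive step of the argument.
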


Let us fix some notation.
Denote by $B_i(p_i,q_i)$ the edge vertices of the sail diagram.
Denote also the next and the previous integer points with respect to $B_i$ on the sail diagram by $B^+_i$ and by $B^-_i$ respectively.
Let 
$$
\lls(\mathcal S ^i_1)=(b_0,\ldots, b_{2s(i)}),
$$
and let $b_{2s(i)+1}$ be the next element of the LLS seqience for the sail diagram.
(Note that $s(i)$ is the total number of edges in the sails of the first $i$ integer angles.)

\vspace{2mm}

From the general theory of LLS sequences for broken lines we have:
$$
\begin{array}{l}
B_i= \big(K(b_1,\ldots, b_{2s(i)-1},b_{2s(i)}),
         K(b_0,\ldots, b_{2s(i)-1},b_{2s(i)})\big);\\
B_i^-= \big(K(b_1,\ldots, b_{2s(i)-1},b_{2s(i)}{-}1),K(b_0,\ldots, b_{2s(i)-1},b_{2s(i)}{-}1)\big);\\
B_i^+=
\big(K(b_1,\ldots, b_{2s(i)-1},b_{2s(i)},b_{2s(i)+1},1),K(b_0,\ldots, b_{2s(i)-1},b_{2s(i)},b_{2s(i)+1},1)\big).
\end{array}
$$
Set also 
$$
\hat B_i=
\big(K(b_1,\ldots, b_{2s(i)-1},b_{2s(i)},0,1),K(b_0,\ldots, b_{2i-1},b_{2s(i)},0,1)\big).
$$
(See Figure~\ref{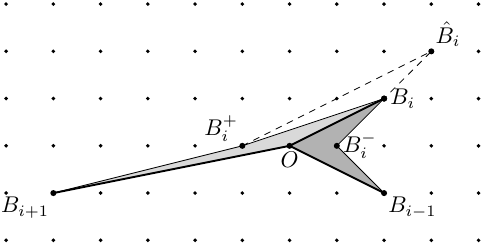}.)
\begin{figure}
    \centering
    \includegraphics[height=5cm]{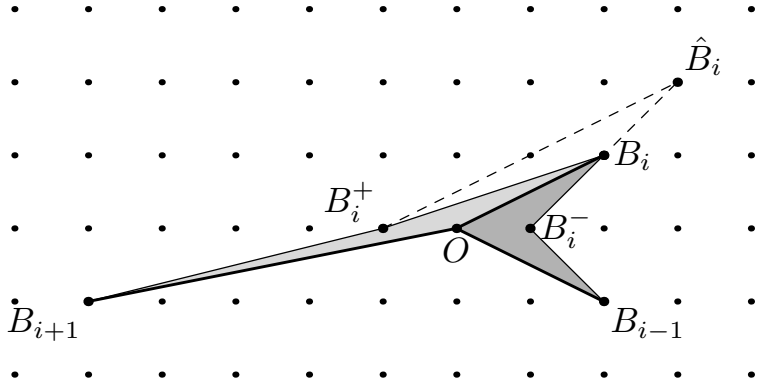}
    \caption{
Definition of points $B_i^-$, $B_i^+$, and $\hat B_i$.}
\label{figure-7.pdf}
\end{figure}

\begin{proposition}\label{properties-aeeb}
The following statements hold:

$($i$)$ $b_{2s(i)+1}=\det(B_iB_i^-,B_iB_i^+)$;

$($ii$)$ $b_{2s(i)+1}=\aee_i$;

$($iii$)$ $\hat B_i=B_i+ B_i^-B_i$;

$($iv$)$ $b_{2s(i)+1}=\det(B_iB_i^+,B_i\hat B_i)$;

$($v$)$ The points $B_i^+$ and $\hat B_i$ are in a line parallel to the line $OB_i$ at the unit integer distance to $OB_i$.

$($vi$)$ The second coordinate of $\hat B_{i}$ is $K(\lls(\mathcal{S}_1^{i})\concat (1))$.

$($vii$)$ The second coordinate of $B_{n-1}^+$  is in the interval between  $0$ and $K(\lls(\mathcal{S}_1^{n-1}))$.
It can take zero extreme value but not the other one.

\end{proposition}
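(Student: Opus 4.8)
The plan is to treat the seven items by type rather than in listed order: (iii) and (vi) are pure continuant identities; (v), (i) and (iv) are determinant computations resting on the unit-distance property of the sail diagram; (ii) is the translation of the resulting determinant into the chord-curvature definition; and (vii) is a genuinely geometric bound, which I expect to be the hard part. Throughout I abbreviate $s=s(i)$ and write points as position vectors from $O$, so that $\det(OP,OQ)=\det(P,Q)$.

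I would first record two continuant facts. From the recursion $K(\ldots,c,0,1)=K(\ldots,c,0)+K(\ldots,c)$ and $K(\ldots,c,0)=K(\ldots)$ (one term shorter) one gets the absorption identity $K(\ldots,c,0,1)=K(\ldots,c,1)$; applied to the second coordinate $K(b_0,\dots,b_{2s},0,1)$ of $\hat B_i$ this is exactly $K(\lls(\mathcal S_1^i)\concat(1))$, giving (vi). Likewise $K(\ldots,c{-}1)=K(\ldots,c)-K(\ldots)$ shows, coordinate by coordinate, that $\hat B_i=2B_i-B_i^-$, i.e. $\hat B_i=B_i+B_i^-B_i$, which is (iii); equivalently $B_i\hat B_i=-B_iB_i^-$. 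I would also establish the two unit determinants $\det(B_i^-,B_i)=\det(B_i,B_i^+)=1$: since the sail diagram is a vortex broken line all of whose edges lie at unit integer distance from $O$, and since $B_i^-,B_i$ (resp. $B_i,B_i^+$) are consecutive integer points on one such edge taken in vortex order, each determinant equals the integer distance times the unit integer length, hence $1$.

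These two ingredients dispatch (v), (i) and (iv) quickly. For (v), from $B_i\hat B_i=-B_iB_i^-$ I get $\det(B_i,\hat B_i)=\det(B_i^-,B_i)=1=\det(B_i,B_i^+)$, so both $\hat B_i$ and $B_i^+$ lie on the line $\{P:\det(B_i,P)=1\}$, which is parallel to $OB_i$ at unit integer distance. For (i), expanding the displaced determinant gives $\det(B_iB_i^-,B_iB_i^+)=\det(B_i^-,B_i^+)-\det(B_i^-,B_i)-\det(B_i,B_i^+)=\det(B_i^-,B_i^+)-2$, so it remains to compute $\det(B_i^-,B_i^+)$; using the explicit continuant coordinates together with the matrix/Euler continuant identities behind Proposition~\ref{MatrixForm}, this evaluates to $b_{2s+1}+2$, yielding $\det(B_iB_i^-,B_iB_i^+)=b_{2s+1}$. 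Finally (iv) is formal: by (iii), $B_i\hat B_i=-B_iB_i^-$, whence $\det(B_iB_i^+,B_i\hat B_i)=\det(B_iB_i^-,B_iB_i^+)=b_{2s+1}$.

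For (ii) I would match the determinant formula of (i) against Definition~\ref{chord-curvature-ABCD}. By construction of the sail diagram the edge vertex $B_i$ sits at the junction of the sails of the consecutive angles $\angle A_{i-1}A_iA_{i+1}$ and $\angle A_iA_{i+1}A_{i+2}$, and the unit-distance neighbours $B_i^-,B_i^+$ play exactly the roles of the closest unit-distance points $B',C'$; the expansion $\det(B_i^-,B_i^+)-2$ then reproduces $\il(BC)-\sgn\langle BC,B'C'\rangle\il(B'C')-2$, so $b_{2s+1}=\aee_i$. The genuinely delicate item is (vii). I would first reduce the two extremes to continuant conditions: writing $b_{2s(n-1)+1}=\aee_{n-1}$, the recursion gives $K(\lls(\mathcal S_1^{n-1})\concat(\aee_{n-1})\concat(1))=K(\lls(\mathcal S_1^{n-1})\concat(\aee_{n-1}))+K(\lls(\mathcal S_1^{n-1}))$, so the second coordinate of $B_{n-1}^+$ equals $K(\lls(\mathcal S_1^{n-1}))$ exactly when $K(\lls(\mathcal S_1^{n-1})\concat(\aee_{n-1}))=0$, and vanishes exactly when that continuant equals $-K(\lls(\mathcal S_1^{n-1}))$. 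The bound itself then follows from convexity and the winding count: by Propositions~\ref{proposition-n/2-1} and~\ref{winding-vortex} the sail diagram performs $n-2$ half-twists with the $y$-coordinates of edge vertices changing sign $n-3$ times, which traps $B_{n-1}^+$, the first integer point of the final sail, between the $x$-axis and the level of $B_{n-1}$. I expect the main obstacle to be the asymmetry of the endpoints: one must show the $x$-axis value is attainable (a legitimate degenerate alignment of the outgoing edge) while the level of $B_{n-1}$ is not, the latter because it would force the last angle to degenerate or the diagram to over- or under-wind, contradicting strict convexity together with Proposition~\ref{winding-vortex}. This case analysis, rather than any of the continuant algebra, is where the real work lies.
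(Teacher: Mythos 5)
Your handling of items (i) and (iii)--(vi) is correct, and it takes a somewhat more computational route than the paper: you obtain (iii) and (vi) from the continuant recursion (the paper instead conjugates by the unimodular matrix $M$ of Proposition~\ref{MatrixForm} to reduce (iii) to the points $(1,0)$, $(1,1)$, $(1,2)$); you get (v) from the two unit determinants $\det(B_i^-,B_i)=\det(B_i,B_i^+)=1$, which is cleaner than the paper's half-plane argument; and your evaluation $\det(B_i^-,B_i^+)=b_{2s+1}+2$ is indeed what the matrix continuant identity yields, giving (i) and then (iv). The paper avoids that computation entirely by reading (i) off Definition~\ref{lls-brokenline-def}: the odd LLS entry at $B_i$ is by definition $\det(B_iB_i^-,B_iB_i^+)$ divided by $\il(B_iB_i^-)\cdot\il(B_iB_i^+)=1$. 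Item (ii) is thinner in your write-up: asserting that $B_i^-,B_i^+$ ``play exactly the roles'' of $B',C'$ from Definition~\ref{chord-curvature-ABCD} \emph{is} the content of (ii), not a proof of it. The paper actually verifies this, by normalizing $B_i^-,B_i,B_i^+$ with an integer-affine map, reconstructing the corresponding polygon edge, and computing the chord curvature including the $\sgn\langle BC,B'C'\rangle$ bookkeeping; you should carry out that computation, since the translation back to the polygon (with the central symmetry applied to alternate angles of the sail diagram) is exactly where signs go wrong.

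The genuine gap is (vii). You propose to trap $B_{n-1}^+$ via Propositions~\ref{proposition-n/2-1} and~\ref{winding-vortex}, and you explicitly defer the decisive step --- that the extreme $K(\lls(\mathcal S_1^{n-1}))$ is never attained --- to an unperformed case analysis about degeneration or over-/under-winding. Those two propositions only constrain the second coordinates of the \emph{edge vertices} $B_1,\dots,B_{n-1}$ and the total winding; they say nothing about an intermediate integer point of the final sail, so they cannot produce the bound, and as written (vii) is not proved. The statement is in fact local, and that is how the paper proves it: $B_{n-1}^+$ is the first integer point after $B_{n-1}$ on the sail of the last angle $\angle B_{n-1}OE$, where $E$ is the endpoint of the sail diagram, which lies on the $x$-axis in the chosen normalization ($E=((-1)^n,0)$). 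That sail is contained in the closed triangle $OB_{n-1}E$, being the part of the boundary of the convex hull that faces $O$ and hence lying between $O$ and the chord $B_{n-1}E$; so each of its points equals $\lambda B_{n-1}+\mu E$ with $\lambda,\mu\ge 0$ and $\lambda+\mu\le 1$, and its second coordinate is $\lambda\cdot K(\lls(\mathcal S_1^{n-1}))$ with $0\le\lambda\le 1$, where $\lambda=1$ forces the point to be $B_{n-1}$ itself. Since $B_{n-1}^+\ne B_{n-1}$, its second coordinate lies in the interval between $0$ and $K(\lls(\mathcal S_1^{n-1}))$, possibly equal to $0$ (exactly when the first edge of the last sail reaches the $x$-axis) but never equal to the other end. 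Replacing your winding argument by this containment closes (vii) in a few lines.
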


\begin{proof}
{\it $($i$)$}
From general theory of continued fractions for broken lines (see e.g. in~\cite{Karpenkov2008}), the point $B_i^-$ is the last integer point of the last edge of the sail for $\alpha_i$; and 
the point $B_i^+$ is the last integer point of the last edge of the sail for $\alpha_{i+1}$ and hence, by the definition of the LLS sequence we have
$$
b_{2s(i)+1}
=\frac{\det(B_iB_i^-,B_iB_i^+)}
{\il(B_iB_i^-)\cdot \il(B_iB_i^+)}
=\frac{\det(B_iB_i^-,B_iB_i^+)}
{1\cdot 1}
=\det(B_iB_i^-,B_iB_i^+).
$$

\noindent {\it $($ii$)$}
Both  $b_{2i+1}$ and $\aee_i$
are invariants of integer geometry and hence without loss
of generality we restrict ourselves to the case (by changing the lattice coordinates):
$$
B_i^-=(b_{2s(i)+1}+1,-1),
\quad 
B_i=(1,0),
\quad 
B_i^+=(0,1),
\quad 
$$
On the one hand, we have
$$
\det(B_iB_i^-,B_iB_i^+)=b_{2s(i)+1}.
$$
On the other hand, let us
consider the edge $A_iA_{i+1}$ in the polygon for the given sail diagram 
corresponding to the edge vertex $B_i$ (see Figure~\ref{figure-5-6.pdf}). The edge $A_iA_{i+1}$ is parallel to the $x$-axes.
Without loss of generality, we consider 
$A_i=(0,0)$ and $A_{i-1}=(a,0)$
where $a=\il(A_{i-1}A_{i})$.
Then, the points $A_i'$ and $A_{i-1}'$
as in Definition~\ref{chord-curvature-ABCD}
will be the last and the first integer points on the sails for the angles 
$\angle A_{i-2}A_{i-1}A_{i}$
and 
$\angle A_{i-1}A_{i}A_{i+1}$
respectively.
Hence, they are
$$
\begin{array}{l}
A_{i-1}'=A_{i-1}-OB_i^-=(a,0)+(-b_{2s(i)+1}-1,1)
=(a-b_{2s(i)+1}-1,1);
\quad \hbox{and}\\
A_{i}'=A_{i}+OB_i^+=(1,0)+(0,1)
=(1,1).
\end{array}
$$
Then, by the definition of chord curvature we get
$$
\begin{aligned}
\aee_i
&=
\il(A_{i-1}A_i)-
\sgn \langle A_{i-1}A_i, A_{i-1}'A_i'\rangle
\il(A_{i-1}'A_i')-2\\
&=
a-(a-b_{2s(i)+1}-2)-2=b_{2s(i)+1}.    
\end{aligned}
$$

Therefore,
$$
\aee_i=\det(B_iB_i^-,B_iB_i^+)=b_{2s(i)+1}.
$$

\begin{figure}
    \centering
    \includegraphics[height=5.5cm]{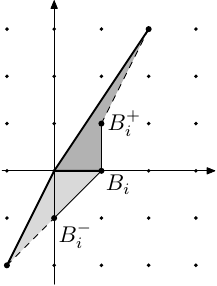}
    \hspace{3em}
    \includegraphics[height=5.5cm]{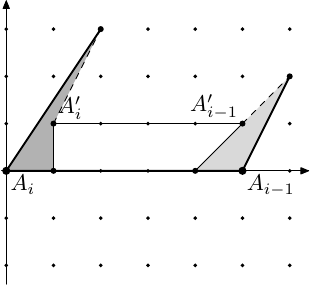}
    \caption{
Two consecutive angles (Left) in the sail diagram and the corresponding edge of the polygon (Right).}
\label{figure-5-6.pdf}
\end{figure}

\noindent {\it $($iii$)$}
Denote
$$
M=
\begin{pmatrix}
    0&1 \\
    1&b_{2s(i)}-1
\end{pmatrix}
\prod\limits_{j=1}^{2s(i)}
\begin{pmatrix}
    0&1 \\
    1&b_{2s(i)-j}
\end{pmatrix}
.
$$
Then, by Proposition~\ref{MatrixForm}, we have
$$
B_i^-=
M
\begin{pmatrix}
1 \\
0
\end{pmatrix},
\quad 
B_i=
\begin{pmatrix}
    0&1 \\
    1&1
\end{pmatrix}
\begin{pmatrix}
    0&1 \\
    1&0
\end{pmatrix}
M
\begin{pmatrix}
1 \\
0
\end{pmatrix},
\quad 
\hat B=
\begin{pmatrix}
    0&1 \\
    1&2
\end{pmatrix}
\begin{pmatrix}
    0&1 \\
    1&0
\end{pmatrix}
M
\begin{pmatrix}
1 \\
0
\end{pmatrix}.
$$
Since $M\in \GL(2,\Z)$, the statement is equivalent to the statement for the points
$$
\begin{pmatrix}
1 \\
0
\end{pmatrix},
\quad 
\begin{pmatrix}
    0&1 \\
    1&1
\end{pmatrix}
\begin{pmatrix}
    0&1 \\
    1&0
\end{pmatrix}
\begin{pmatrix}
1 \\
0
\end{pmatrix}
=
\begin{pmatrix}
1 \\
1
\end{pmatrix}
,
\quad 
\begin{pmatrix}
    0&1 \\
    1&2
\end{pmatrix}
\begin{pmatrix}
    0&1 \\
    1&0
\end{pmatrix}
\begin{pmatrix}
1 \\
0
\end{pmatrix}
=
\begin{pmatrix}
1 \\
2
\end{pmatrix},
$$
which obviously holds.

\vspace{2mm}

\noindent {\it $($iv$)$}
Since $B_iB_i^+=B_i^-B_i$, the values of the determinants coincide, so we have:
$$
b_{2s(i)+1}=\det(B_iB_i^-,B_iB_i^+)
=\det(B_iB_i^+,B_i\hat B_i).
$$

\noindent {\it $($v$)$}
The points $B_i^+$ and $B_i^-$ are at unit distance from the segment $OB_i$ since they are the first and the last points of the corresponding sails. They are in different half-planes with respect to the line containing $OB_i$.
By Item~({\it iii}), the point $\hat B_i$
is at unit distance to $OB_i$ in the different hyperplane to the point $B_i^-$.
Therefore, the points $B_i^+$ and $\hat B_i$ are in a line parallel to the line $OB_i$ at the unit integer distance to it.

\vspace{2mm}

\noindent {\it $($vi$)$} The second coordinate of $B_i^+$ it is equivalent to
$$
\begin{aligned}
K(b_0,\ldots, b_{2s(i)-1},b_{2s(i)},b_{2s(i)+1},0,1)
&=
K(b_0,\ldots, b_{2s(i)-1},b_{2s(i)},b_{2s(i)+1},1)\\
&=K(\lls(\mathcal{S}_1^{i})\concat (1)).
\end{aligned}
$$

\noindent {\it $($vii$)$}
Note that $B_i^+$ is the first integer point of sail of the angle $\angle B_iOB_0$.
Therefore, the second coordinate of $B_i^+$ has the same sign as $B_i$ (or $B_i^+$ can be zero), the absolute value of the second coordinate of $B_i^+$ 
is smaller than the absolute value of the second coordinate of $B_i$. This concludes the proof.

\end{proof}

\begin{corollary}\label{LLS-LLS-corollary}
The LLS sequence of the sail diagram of a convex integer polygon $P$
coincides with the the LLS sequence for the angle-curvature sequence of $P$.
\end{corollary}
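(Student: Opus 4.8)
The plan is to read the LLS sequence of the sail diagram entry by entry and to match it, block by block, against the concatenation $\lls(\mathcal S_1^n)=\lls(\alpha_1)\concat(\aee_1)\concat\cdots\concat(\aee_{n-1})\concat\lls(\alpha_n)$ that, by Definition~\ref{mathcal-S-n}, is the LLS sequence attached to the angle-curvature sequence of $P$. As a preliminary step I would recall that, by construction, the sail diagram is the union of the sails of the consecutive angles $\alpha_1,\dots,\alpha_n$ translated to the origin (with a central symmetry applied to the even-numbered ones), and that by the preceding proposition it is a vortex broken line about $O$ all of whose edges lie at unit integer distance to $O$. Hence its LLS sequence is well defined in the sense of Definition~\ref{lls-brokenline-def}, and, since translations and the central symmetry preserve all integer lengths and integer sines, the within-sail data is unchanged by the shifts used to assemble the diagram.

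Next I would analyse the contribution of a single sail. Inside the $i$-th sail the even-indexed entries of the sail-diagram LLS sequence are the integer lengths of its edges and the odd-indexed entries are the integer sines of the angles between consecutive edges; by the definition of $\lls(\alpha_i)$ these interleaved data are precisely the entries of $\lls(\alpha_i)$, including the two boundary length entries $\il(B_{i-1}B_{i-1}^+)$ and $\il(B_i^-B_i)$ at the two ends. Thus the block of the diagram's LLS sequence swept out by the $i$-th sail is exactly $\lls(\alpha_i)$.

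The decisive step is the identification of the odd entries produced at the junctions between consecutive sails, namely at the edge vertices $B_i$. Here I would invoke Proposition~\ref{properties-aeeb}: item (i) shows that the odd element of the diagram's LLS sequence recorded at $B_i$ equals $\det(B_iB_i^-,B_iB_i^+)$ (the integer sine at $B_i$, the two adjoining edges being at unit distance to $O$), and item (ii) identifies this value with the chord curvature $\aee_i$. Therefore every connecting odd entry of the sail diagram is exactly $\aee_i$, inserted between the last length entry of $\lls(\alpha_i)$ and the first length entry of $\lls(\alpha_{i+1})$.

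Assembling the two observations, the LLS sequence of the sail diagram reads $\lls(\alpha_1)\concat(\aee_1)\concat\lls(\alpha_2)\concat\cdots\concat(\aee_{n-1})\concat\lls(\alpha_n)$, which is exactly $\lls(\mathcal S_1^n)$. The only genuine obstacle I foresee is bookkeeping: one must verify that the edge-counting index $s(i)$ lines the blocks up so that no length entry at a junction is either duplicated or dropped, and, in the closed case, that the wrap-around from $\alpha_n$ back to $\alpha_1$ is recorded consistently with the cyclic convention for the angle-curvature sequence (so that the closing curvature $\aee_n$ appears in the right place). With Proposition~\ref{properties-aeeb}(ii) already available, the mathematical content is settled and what remains is this careful alignment of indices.
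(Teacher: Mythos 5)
Your proposal is correct and follows essentially the same route as the paper's own proof: the within-sail blocks of the diagram's LLS sequence match $\lls(\alpha_i)$ by construction, and the junction entries at the edge vertices $B_i$ equal $\det(B_iB_i^-,B_iB_i^+)=\aee_i$ by Proposition~\ref{properties-aeeb}(i) and (ii), which is exactly the two-step argument the paper gives. Your extra attention to the index bookkeeping and the cyclic wrap-around is sound but not a different method, just a more explicit write-up of the same alignment the paper leaves implicit.
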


\begin{proof}
By construction of the sail diagram, all the elements of the sails of angles coincide with the corresponding elements for the LLS sequences of angles in the angle-curvature.

\vspace{1mm}

By Proposition~\ref{properties-aeeb}(i) and (ii), we have
$$
\aee_i=\det(B_iB_i^-,B_iB_i^+).
$$
The last expression is precisely the expression for the corresponding angle in Definition~\ref{lls-brokenline-def}
after rescaling the corresponding vectors.

\vspace{1mm}

Therefore, the LLS sequence of a the sail diagram 
coincides with the the LLS sequence for the angle-curvature sequence.
\end{proof}

\begin{corollary}\label{aee-formula-corollary}
It holds
$$
 \aee_{n-1} = -\left\lfloor \frac{K(\lls(\mathcal S_1^{n-1})\concat (1))}{K(\lls(\mathcal S_1^{n-1}))} \right\rfloor.
$$
\end{corollary}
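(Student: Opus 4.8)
The plan is to reduce the identity to a division-with-remainder statement about the second coordinates of three distinguished lattice points on the sail diagram, and then read off the floor. Write $U=\lls(\mathcal{S}_1^{n-1})$ and, for a point $P$, write $h(P)$ for its second coordinate. By Proposition~\ref{properties-aeeb}(ii) and (iv) we have $\aee_{n-1}=\det(B_{n-1}B_{n-1}^+,B_{n-1}\hat B_{n-1})$, so it suffices to control the relative positions of $B_{n-1}$, $B_{n-1}^+$, and $\hat B_{n-1}$. By the continuant formula for the vertices of the sail diagram, $h(B_{n-1})=K(U)$, and by Proposition~\ref{properties-aeeb}(vi), $h(\hat B_{n-1})=K(U\concat(1))$. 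These two heights are exactly the denominator and numerator appearing in the claimed formula.

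Next I would exploit that $OB_{n-1}$ is a primitive vector: it is the endpoint shared by two consecutive sails in the diagram, hence the primitive vector along the edge direction $A_{n-1}A_n$. By Proposition~\ref{properties-aeeb}(v) the points $\hat B_{n-1}$ and $B_{n-1}^+$ lie on a common line parallel to $OB_{n-1}$ at unit integer distance, so by primitivity the integer points of that line are precisely $\hat B_{n-1}+kB_{n-1}$ for $k\in\Z$. Hence $B_{n-1}^+=\hat B_{n-1}+k_0 B_{n-1}$ for a unique integer $k_0$. A short expansion of the defining determinant then gives $\aee_{n-1}=k_0\det(B_{n-1},\hat B_{n-1})$, and using Proposition~\ref{properties-aeeb}(iii), which yields $\det(B_{n-1},\hat B_{n-1})=\det(B_{n-1}^-,B_{n-1})$, together with the vortex (counterclockwise) orientation of the sail diagram, I would check this determinant equals $+1$; thus $\aee_{n-1}=k_0$.

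Finally, comparing second coordinates in $B_{n-1}^+=\hat B_{n-1}+k_0 B_{n-1}$ gives the numerical relation $h(B_{n-1}^+)=K(U\concat(1))+k_0K(U)$. Proposition~\ref{properties-aeeb}(vii) pins $h(B_{n-1}^+)$ to the half-open interval between $0$ and $K(U)$, attaining $0$ but not $K(U)$ and sharing the sign of $K(U)$; this is exactly the range of the remainder in floor division by $K(U)$. Therefore $h(B_{n-1}^+)$ is that remainder and $-k_0=\lfloor K(U\concat(1))/K(U)\rfloor$, whence $\aee_{n-1}=k_0=-\lfloor K(U\concat(1))/K(U)\rfloor$.

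The routine-but-delicate part is the sign bookkeeping. One must verify that the orientation forces $\det(B_{n-1},\hat B_{n-1})=+1$ rather than $-1$, and that Proposition~\ref{properties-aeeb}(vii) produces the correctly signed remainder uniformly in both cases $K(U)>0$ and $K(U)<0$, so that the single floor formula holds without a case split. This is where I expect the main care to be needed; everything else is a direct computation with continuants and $2\times2$ determinants.
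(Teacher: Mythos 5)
Your proposal is correct and takes essentially the same route as the paper's proof: both arguments run entirely on Proposition~\ref{properties-aeeb}, reading the numerator and denominator of the floor as the second coordinates of $\hat B_{n-1}$ and $B_{n-1}$ (item (vi) and the vertex formula), placing $B_{n-1}^+$ and $\hat B_{n-1}$ on a common line parallel to $OB_{n-1}$ at unit distance (item (v)), confining the second coordinate of $B_{n-1}^+$ to the half-open remainder range (item (vii)), and identifying the resulting determinant with $\aee_{n-1}$ via items (ii)/(iv). The only difference is one of explicitness: you spell out the step the paper compresses into ``computing the second coordinates'' by writing $B_{n-1}^+=\hat B_{n-1}+k_0B_{n-1}$ and verifying $\det(B_{n-1},\hat B_{n-1})=+1$ from item (iii) and the vortex orientation, which is a sound filling-in of the same argument rather than a new one.
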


\begin{proof}
By Proposition~\ref{properties-aeeb}(vi), 
the numerator of the expression on the right hand side contains the second coordinate of $\hat B_{n-1}$.
By construction, the denominator contains the second coordinate of $B_{n-1}$.
Finally, by Proposition~\ref{properties-aeeb}(vii), the second coordinate of  $\hat B_{n-1}^+$ is
in the segment $[0,K(\lls(\mathcal{S}_1^{n-1}))$ excluding the right end.

\vspace{1mm}

By Proposition~\ref{properties-aeeb}$($v$)$, the points $B_{n-1}^+$ and $\hat B_{n-1}$ are in a line parallel to the line $OB_i$ on the unit integer length to it.
Therefore, computing the second coordinates we have
$$
\det(B_{n-1}B_{n-1}^+,B_{n-1}\hat B_{n-1})=
-\left\lfloor \frac{K(\lls(\mathcal S_1^{n-1})\concat (1))}{K(\lls(\mathcal S_1^{n-1}))} \right\rfloor.
$$
It remains to note that, by Proposition~\ref{properties-aeeb}$($iv$)$, it holds
$$
\det(B_{n-1}B_{n-1}^+,B_{n-1}\hat B_{n-1})=\aee_{n-1}.
$$
This concludes the proof.
\end{proof}

\section{Proofs of the main results}
\label{Proofs of the main results}

In this section we conclude the proofs of Theorem~\ref{IKEA-solution} and Theorem~\ref{IKEA-2}.

\subsection{Proof of Theorem~\ref{IKEA-solution}}\label{main-proof-Th1}

\noindent{\bf From a polygon to three conditions.}
Consider a convex $n$-gon with the angle-curvature
sequence $\mathcal S$.

Without loss of generality, we may assume that the first two vertices of the corresponding sail diagram are $A_0=(1,0)$ and $A_1=(1,a_0)$.
First of all, by Corollary~\ref{LLS-LLS-corollary}
the LLS sequence of the sail diagram 
coincides with the LLS sequence for the angle-curvature sequence.
Then, by Theorem~\ref{Geometry-of-broken-lines}, the sequence $(\lls(\mathcal S_1^j))_{j=1}^{n}$
coincides with the sequence of $y$ coordinates of the broken line connecting all the edge vertices of the sail diagram.
Since the last point of the sail diagram is $((-1)^n,0)$,
the first condition is immediate.

\vspace{2mm}

The second condition follows directly from 
Corollary~\ref{aee-formula-corollary}
(after shifting all the indices by 1).

\vspace{2mm}

The last condition follows directly from Proposition~\ref{winding-vortex}.

\vspace{2mm}

\noindent{\bf From three conditions to a polygon.}
Let us now construct a polygon with integer angles integer congruent to given ones: $\alpha_1, \ldots, \alpha_n$
and with corresponding chord curvatures $\aee_1,\ldots, \aee_{n}$ satisfying all the conditions of Theorem~\ref{IKEA-solution}.

\vspace{2mm}

Let us write the angle-curvature sequence
$$
\mathcal{S}=(\alpha_1,\aee_1,\ldots, \alpha_{n-1},\aee_{n-1}, \alpha_n)
$$ 
and the corresponding LLS-sequence for it.
Starting with vertices $(1,0)$ and $(1,a)$,
we construct the broken line with vertices whose coordinates are defined from this LLS-sequence by the formulae of Theorem~\ref{Geometry-of-broken-lines}. By the first and the third conditions, this broken line has the second endpoint at the line $y=0$ and its winding number is $n/2-1$.

\vspace{2mm}

So we have constructed a collection of angles which we shift (and take centrally symmetric for even angles) by integer vectors in order to get a convex $n$-gon with prescribed angle-curvature sequence. 
First, we find an $n$-gon whose angles are shifted by arbitrary real vectors (we omit the proof of this classical statement on Euclidean geometry here). Since all vectors are proportional to integer vectors, there exist a small parallel perturbation of the $n$-gon, which has rational coordinates. 
Finally, we arrive to an integer $n$-gon by scaling the rational polygon (multiplying by all denominators of rational coordinates of all vertices).

\vspace{2mm}

The obtained $n$-gon has the prescribed sail diagram.
It remains to note that the LLS-sequence of this sail diagram coincide with the LLS-sequence for the angle-curvature of the $n$-gon by Propositoin~\ref{winding-vortex}. This completes the proof.
\qed

\begin{remark}
The construction used in the proof 
suggests that there are infinitely many 
distinct integer types of polytopes with a given angle-curvature sequence. Note also that two $n$-gons with the same angle-curvature sequence are {\it parallel to each other} (after an integer-affine transformation of one of them), namely the sides of the first $n$-gon are parallel to the corresponding sides of the second $n$-gone.
The situation here is similar to the Euclidean case.
\end{remark}

\vspace{2mm}

\subsection{Proof of Theorem~\ref{IKEA-2}}

The expression for $x$ and $y$ follow directly from 
Corollary~\ref{aee-formula-corollary}.

\vspace{2mm}

The expression for $\beta$ is tautological. Indeed the sail diagram $B$ defining $\beta$ has the LLS sequence  $\lls(\mathcal{S}_1^n)$.
Then, let us show that $\beta \cong \angle\big(-U^t\big)$
where
$U=\lls(\mathcal{S}_1^n)$.

\vspace{1mm}

Consider the angle $-\beta$
(this is the angle symmetric to the angle $\beta$ about the $x$-axis with the first edge at the origin).
Then we take the broken line $B'$ symmetric about the $x$-axis to the broken line $B$.
By Definition~\ref{lls-brokenline-def}, 
the LLS-sequence for $B$ is precisely $-U$. Hence 
by Theorem~\ref{Geometry-of-broken-lines}
$$
-\beta\cong \angle (B')=\angle (-B)\cong\angle(-U).
$$
Finally, we have
$$
\beta\cong -\beta^t \cong \big(\angle (-U)\big)^t=\angle (-U^t)
$$
as it is stated in the theorem.
\qed

\section{A few words on the multidimensional case}
\label{A few words on the multidimensional case}

We conclude this paper with a small discussion of the open IKEA problem in the multidimensional case.
\begin{problem}{\bf (IKEA problem in higher dimensions.)}
Classify all angles in lattice convex polyhedra in dimensions greater than 2.
\end{problem}

{\it How to generalize the statements of Theorems~\ref{IKEA-solution} and~\ref{IKEA-2}?}
In order to approach this problem one might consider some multidimensional continued fractions.

\vspace{2mm}

There are at least two competing theories of multidimensional continued fractions. The first is based on geometry of lattices. 
Here again we have at least two different version of continued fractions  that
may be used here: Klein polyhedra~\cite{Klein1895,Klein1896,Arnold1980,karpenkov2022geometry}
and Minkovski-Voronoi polyhedra~\cite{Minkowski1896,Voronoi1896, Minkowski1967,KU2017}.
The second approach is based on the algorithmic theory of multidimensional continued fractions~\cite{Schweiger2000,Karpenkov2022-Hermite}.
(Some further problems on geometric continued fractions 
are collected in~\cite{Karpenkov2017-problems}.)
The problem is open in all the above settings.
The progress in it will contribute to the study of singularities of toric varieties, see, e.g.~\cite{Tsuchihashi1983}.

\vspace{2mm}

Finally, we would like to mention a recent progress in the study of lattice trigonometry for cones in higher dimensions in~\cite{BDK2023}.

\medskip

\printbibliography

\end{document}